\documentclass[11pt]{article}

\usepackage{amsmath,amssymb,amsthm,setspace,graphicx}
\usepackage[text={6.5in,8.4in}, top=1.3in]{geometry}
\usepackage{hyperref}

\setstretch{1.2}

\newtheorem{theorem}{Theorem}[section]

\newtheorem{lemma}[theorem]{Lemma}

\newtheorem{problem}[theorem]{Problem}
\newtheorem{definition}[theorem]{Definition}

\renewcommand{\L}{{\cal L}}

\begin{document}

\title{Hamiltonian cycles above expectation in $r$-graphs and quasi-random $r$-graphs \thanks{This research was supported by the Israel Science Foundation (grant No. 1082/16).}}

\author{
Raphael Yuster
\thanks{Department of Mathematics, University of Haifa, Haifa
31905, Israel. Email: raphy@math.haifa.ac.il}
}

\date{}

\maketitle

\setcounter{page}{1}

\begin{abstract}
Let $H_r(n,p)$ denote the maximum number of Hamiltonian cycles in an $n$-vertex $r$-graph with density $p \in (0,1)$.
The expected number of Hamiltonian cycles in the random $r$-graph model $G_r(n,p)$
is $E(n,p)=p^n(n-1)!/2$ and in the random graph model $G_r(n,m)$ with $m=p\binom{n}{r}$ 
it is, in fact, slightly smaller than $E(n,p)$.

For graphs, $H_2(n,p)$ is proved to be only larger than $E(n,p)$ by a polynomial factor
and it is an open problem whether a quasi-random graph with density $p$ can be larger than $E(n,p)$
by a polynomial factor.

For hypergraphs (i.e. $r \ge 3$) the situation is drastically different. For all $r \ge 3$ it is proved that  $H_r(n,p)$ is
larger than $E(n,p)$ by an {\em exponential} factor and, moreover, there are quasi-random $r$-graphs with density $p$ whose number of Hamiltonian cycles is larger than $E(n,p)$ by an exponential factor.

\vspace*{3mm}
\noindent
{\bf Keywords:} Hamiltonian cycle; quasi-random hypergraph; $r$-graph

\end{abstract}

\section{Introduction}

All graphs and hypergraphs in this paper are finite and simple.
Let $r \ge 2$ be an integer. An $r$-graph (also called $r$-uniform hypergraph) $G$ has vertex set
$V(G)$ and edge set $E(G)$ which is a set of $r$-element subsets of $V$. The complete $r$-graph on $n$ vertices, denoted $K_n^r$, has, therefore, $\binom{n}{r}$ edges.
An $r$-graph $G$ on $n$ vertices has density $p$ if $|E(G)|=p\binom{n}{r}$.
In this paper we are interested in counting the number of Hamiltonian cycles
in $r$-graphs with a given constant density $p \in (0,1)$.
The problem of counting the number of Hamiltonian cycles and paths in graphs and hypergraphs has been extensively studied in various contexts \cite{AAR-2001,alon-1990,alon-2016,cuckler-2007,CK-2009,ERR-1971,FKS-2017,FK-2005,FMV-2019,frieze-2000,GLS-2017,janson-1994,KLO-preprint,KO-2012,KO-2014,szele-1943,thomassen-1985,wormald-p,yuster-2017}.

We begin with a definition of a Hamiltonian cycle. Suppose $G$ is an $r$-graph.
A Hamiltonian cycle of $G$ is a permutation of the vertices of $G$, say $v_1,\ldots,v_n$
such that for all $i=1,\ldots,n$ the set $\{v_i,\ldots,v_{i+r-1}\}$ is an edge (indices are modulo $n$).
We identify the Hamiltonian cycle with its set of $n$ edges, so observe that for $n \ge r+2$, each Hamiltonian cycle
is associated with precisely $2n$ permutations as rotations and reversing of the permutation yield the same cycle. In particular, $K_n^r$ has precisely $(n-1)!/2$ Hamiltonian cycles
for $n \ge r+2$ (if $r=2$ then this holds also for $n=3$).
Given a permutation of $n$ vertices, its {\em $r$-set} is the set of $n$ edges of its corresponding
Hamiltonian cycle in $K_n^r$.
We note also that there are some looser notions of Hamiltonicity
where the intersection of two consecutive edges of the cycle is allowed to be of order less than $r-1$ but we are not concerned with these looser notions here.

As our aim is to count Hamiltonian cycles in $r$-graphs with a given density, let $H(G)$ denote
the number of Hamiltonian cycles in $G$. Clearly, for any\footnote{One can assume that $p$ is rational in order to have infinitely many $n$ for which there are $r$-graphs with $n$ vertices and density $p$.} given density $p \in (0,1)$
there are $r$-graphs with density $p$ that are non-Hamiltonian and on the other hand, it is clear that every $r$-graph with density $p$ cannot have too many Hamiltonian cycles.
Thus, the extremal parameter of interest is $H_r(n,p)$, the maximum possible value of $H(G)$ ranging over all $n$-vertex $r$-graphs with density $p$. Note that we assume that $n$ is such that $p\binom{n}{r}$ is an integer
as otherwise $H_r(n,p)$ is undefined.

To estimate $H_r(n,p)$, it is natural to consider the number of Hamiltonian cycles expected in random $r$-graphs as this tells us what (approximately) $H(G)$ typically is.
This approach is also taken in most other papers that consider counting the number of Hamiltonian cycles.
The two standard models to consider are the Erd\H{o}s-R\'enyi random $r$-graph model $G_r(n,p)$ where
each edge is uniformly and independently selected for $G \sim G_r(n,p)$ with probability $p$, 
and the Erd\H{o}os-R\'enyi random $r$-graph model $G_r(n,m)$ where precisely $m$ edges of the
$\binom{n}{r}$ possible ones are selected for $G \sim G_r(n,m)$.
It is immediate that the expected value of $H(G)$ for $G \sim G_r(n,p)$ is $p^n(n-1)!/2$
as each Hamiltonian cycle of $K_n^r$ remains such in $G$ with probability $p^n$,
so we call this quantity the {\em expectation value} and denote $E(n,p)=p^n(n-1)!/2$.
Unfortunately, we cannot use  this argument to say that $E(n,p)$ is a lower bound for $H_r(n,p)$ since $G \sim G_r(n,p)$ only
has an {\em expected} density of $p$, so it could have larger density, and the number of edges is positively correlated with $H(G)$. On the other hand, for $G \sim G_r(n,p\binom{n}{r})$
the expected value of $H(G)$ is less than $E(n,p)$ by a constant factor. To see this, just observe that
for any given nontrivial prefix of a Hamiltonian cycle (namely, a path with at least one edge), if one tries to extend it by choosing the next vertex uniformly at random from the vertices that are not yet on the path, the probability of the next edge to exist is slightly smaller than $p$.
On the positive side, the expectation of $H(G)$ for $G \sim G_r(n,p\binom{n}{r})$ is obviously a lower bound for $H_r(n,p)$, so we have that $H_r(n,p) \ge c_p E(n,p)$ where $c_p < 1$ is a constant depending on $p$.

So, what can we say about $H_r(n,p)$? Is it always larger than $E(n,p)$ and if so, by how much?
The answer, though always positive, is very different with respect to ``how much'' depending on whether $r=2$ or $r > 2$, and in a strong sense which we now make precise.
For a parameter $\epsilon > 0$, we say that an $r$-graph with $n$ vertices is
{\em $(\epsilon,p)$}-quasi random if its density is $p$ and for every subset $W \subset V(G)$ with $\lfloor n/2 \rfloor$
vertices, the density of $G[W]$ is $p\pm\epsilon$ (namely, the density of the $r$-graph induced by $W$
is in $(p-\epsilon,p+\epsilon)$). For $r=2$, this definition of quasi-randomness is equivalent to
several other definitions, as first proved by Chung, Graham and Wilson \cite{CGW-1989}. For $r \ge 3$ there are other notions of
quasi-randomness (see \cite{FR-2002,gowers-2006}) and the one we defined here is usually called weak quasi-randomness although our
 results stated later could easily be shown to hold for other notions of hypergraph 
 quasi-randomness. Clearly, for every $\epsilon > 0$, an element chosen from $G_r(n,p\binom{n}{r})$ is 
 $(\epsilon,p)$-quasi random asymptotically almost surely. Given that the expectation of $H(G)$ chosen 
from this model
is always smaller than $E(n,p)$, the following open problem seems of natural interest.
Define $H_r(n,p,\epsilon)$ as the maximum of $H(G)$ ranging over all $n$-vertex $r$-graphs which are
$(\epsilon,p)$-quasi random. Observe that trivially $H_r(n,p,1)=H_r(n,p)$.
\begin{problem}\label{prob:1}
	Let $\epsilon > 0$ and $p \in (0,1)$ be fixed. Determine or estimate 
	\begin{equation}\label{e:0}
	\frac{H_r(n,p,\epsilon)}{E(n,p)}\;.
	\end{equation}
\end{problem}
So, by the arguments above, we know that (\ref{e:0}) is bounded from below by a constant. As can be seen from the following theorems, in the case $r=2$, (\ref{e:0}) is {\em upper-bounded} by
a polynomial in $n$ while for $r \ge 3$, (\ref{e:0}) is {\em lower-bounded} by an exponential in $n$.
In fact, for the case $r=2$ we do not even know how to lower-bound (\ref{e:0}) by a polynomial in $n$ for every $\epsilon > 0$ (we do know that for, say, $\epsilon=1$ though) so we raise the following specific problem.
\begin{problem}\label{prob:2}
	Let $\epsilon > 0$ and $p \in (0,1)$ be fixed. Determine whether the following holds:
	$$
	\lim_{n \rightarrow \infty} \frac{H_2(n,p,\epsilon)}{E(n,p)} = \infty\;.
	$$
\end{problem}

The following theorem proves that $H_2(n,p)$ is polynomially larger than $E(n,p)$ but not more than that.
\begin{theorem}\label{t:graphs}
	Let $p \in (0,1)$. There exists a constant $c_p$ depending on $p$ such that
	$$
	(1+o(1))c_p n^{\frac{1}{2}}E(n,p) \le H_2(n,p) \le (1+o(1))\frac{1}{e}(\sqrt{2\pi})^{\frac{1}{p}-1}p^{\frac{1}{2p}}n^{\frac{1}{2}+\frac{1}{2p}}E(n,p)\;.
	$$
\end{theorem}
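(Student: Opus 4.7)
The plan is to establish the two bounds by essentially independent arguments.

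\textbf{Upper bound.} Apply Br\'egman's permanent inequality to the adjacency matrix $A$ of $G$. Let $d_1,\dots,d_n$ be the degree sequence, so $\sum d_i=pn(n-1)$ and the average degree is $\bar d=p(n-1)$. Every Hamilton cycle of $G$ corresponds to exactly two cyclic permutations $\sigma\in S_n$ with $A_{i,\sigma(i)}=1$ for all $i$ (the two traversal directions); other permutations of $\sigma$ (with shorter cycle structure) can also be nonzero terms in $\operatorname{per}(A)=\sum_\sigma\prod_iA_{i,\sigma(i)}$, so $2H(G)\le\operatorname{per}(A)$. Br\'egman gives $\operatorname{per}(A)\le\prod_i(d_i!)^{1/d_i}$, and since $d\mapsto\tfrac{\ln d!}{d}$ is concave (an easy finite-difference computation shows the differences $\tfrac{\ln(d+1)!}{d+1}-\tfrac{\ln d!}{d}$ are decreasing in $d$), Jensen's inequality under the constraint $\sum d_i=pn(n-1)$ yields $\operatorname{per}(A)\le(\bar d!)^{n/\bar d}$, and hence $H(G)\le\tfrac12(\bar d!)^{n/\bar d}$. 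Stirling's formula gives $(\bar d!)^{n/\bar d}\sim(\bar d/e)^n(2\pi\bar d)^{n/(2\bar d)}$, and the crucial observation is that $(\bar d/e)^n=(p(n-1)/e)^n\sim e^{-1}(pn/e)^n$, where the $e^{-1}$ factor comes from $(1-1/n)^n\to e^{-1}$; this is the source of the $\tfrac1e$ factor in the theorem. Dividing by $E(n,p)\sim p^n\sqrt{2\pi n}\,n^{n-1}/(2e^n)$ and simplifying algebraically gives exactly the stated ratio $\tfrac1e(\sqrt{2\pi})^{1/p-1}p^{1/(2p)}n^{1/2+1/(2p)}$.

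\textbf{Lower bound.} I will exhibit a $d$-regular graph $G$ of density $p$ (with $d=p(n-1)$) satisfying $H(G)\ge(1-o(1))(d/e)^n$. By the same Stirling calculation, $(d/e)^n\sim(\sqrt{2/\pi}/e)\sqrt n\cdot E(n,p)$, which gives the claimed $c_p n^{1/2}E(n,p)$ lower bound. Take $G$ to be a uniformly random $d$-regular graph on $n$ vertices (restricting to $n$ such that $p(n-1)\in\mathbb Z$; otherwise use $d=\lfloor p(n-1)\rfloor$ and then add $O(n)$ extra edges to reach density exactly $p$, which only increases $H$). A first-moment computation in the configuration model in the dense regime, in the spirit of McKay--Wormald and Janson, shows that $\mathbb E[H(G_{n,d})]\ge(1-o(1))c'_p\cdot(d/e)^n$, and averaging extracts a specific witness. (For $p\ge 1/2$ one may alternatively appeal directly to the Cuckler--Kahn theorem on $d$-regular Dirac graphs, which yields the same conclusion without randomness.)

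\textbf{Main obstacle.} Given Br\'egman and Stirling, the upper bound is routine; the only technical step is verifying the concavity of $\tfrac{\ln d!}{d}$, which is elementary. The delicate step is the lower bound: one must carry out the configuration-model estimate of $\mathbb E[H(G_{n,d})]$ in the dense regime $d=\Theta(n)$. This requires careful enumeration of the near-matchings of the $dn$ half-edges that contain a prescribed Hamilton cycle, subject to conditioning on the $d$-regular constraint; the calculation is standard but technically delicate, and it is this step that produces the correct multiplicative constant $c'_p$.
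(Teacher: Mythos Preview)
Your upper bound is correct and is the same as the paper's: both combine $2H(G)\le\operatorname{per}(A_G)$ with Br\'egman's inequality and the observation (your Jensen argument, the paper's citation of Alon) that $\prod_i(d_i!)^{1/d_i}$ is maximised when the $d_i$ are equal, followed by Stirling.

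For the lower bound the paper takes a different route from yours. It builds explicit graphs: for $p\le\tfrac12$ the crown graph $K_{n/2,n/2}$ minus a perfect matching, counting its Hamilton cycles via the Egorychev--Falikman (van der Waerden) lower bound for permanents of doubly stochastic matrices; for $p>\tfrac12$ the Tur\'an graph $T(n,k)$, using a classical asymptotic of Smirnov et al.\ for cyclic sequences with no two equal letters within distance~$2$. A short density-transfer lemma then moves from the construction's density to the target~$p$. In each case the $\sqrt n$ factor falls out of an exact asymptotic count.

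Your proposed lower bound has two genuine gaps. First, the appeal to Cuckler--Kahn does not give the precision you need: their theorem yields, for a $d$-regular Dirac graph, only $H(G)\ge(d/e)^n\,2^{-o(n)}$, and the sub-exponential loss swallows the $\sqrt n$ factor you are trying to extract. Second, the ``first-moment computation in the configuration model in the dense regime'' is not standard in the sense you suggest: when $d=\Theta(n)$ the probability that a random configuration is simple is $e^{-\Theta(n^2)}$, so the configuration model cannot be used directly. One would instead need the McKay--Wormald asymptotic enumeration of dense regular graphs and a computation of the ratio (number of $(d{-}2)$-regular graphs avoiding the $n$ edges of a fixed Hamilton cycle)$/$(number of $d$-regular graphs), accurate to a $(1+o(1))$ multiplicative factor. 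You acknowledge this step as ``technically delicate'' but do not carry it out; without it the lower bound is not established. The paper's explicit constructions avoid this issue entirely.
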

	While the proof of the upper bound follows rather immediately from a result of Alon \cite{alon-1990}, the proof of the lower bound is somewhat more delicate as we need to construct graphs with density {\em precisely} $p$ having many Hamiltonian cycles. For some densities there are relatively simple explicit constructions, while for other densities, the construction is probabilistic.

We now turn to hypergraphs where the proofs become considerably more involved.
Our main result is the following.
\begin{theorem}\label{t:hypergraphs}
	Let $\epsilon > 0$ and $p \in (0,1)$ be fixed. Then for all $r \ge 3$,
	$$
	\frac{H_r(n,p,\epsilon)}{E(n,p)} \ge 2^{\Omega(n^{\gamma})}
	$$
	where $\gamma=\frac{1}{2}-o_n(1)$ if $r \ge 4$ and $\gamma=\frac{1}{3}-o_n(1)$ if $r=3$.
\end{theorem}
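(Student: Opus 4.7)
The plan is to construct a randomized $(\epsilon,p)$-quasi-random $r$-graph $G$ whose expected number of Hamiltonian cycles exceeds $E(n,p)$ by the required factor, and then use a second moment argument to pass to an explicit instance.

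Fix a subset $S \subseteq V$ of size $s$, to be optimized. Define the random $r$-graph $G$ by taking all $r$-subsets of $S$ as edges (a planted clique) and including every other $r$-subset independently with probability $q = (p\binom{n}{r}-\binom{s}{r})/(\binom{n}{r}-\binom{s}{r})$, so that the expected density is $p$. For any $W$ of size $\lfloor n/2 \rfloor$, the only non-random contribution to the density of $G[W]$ is the clique on $S \cap W$, which perturbs the density by at most $O((s/n)^r)$; provided $s = o(n)$ this is $o(1)$, and a Chernoff bound plus a union bound over all $W$ then yields $(\epsilon,p)$-quasi-randomness with high probability.

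To count Hamiltonian cycles, for each Hamiltonian cycle $C$ of $K_n^r$ let $m(C)$ denote the number of edges of $C$ entirely inside $S$. These edges are automatically in $G$, while each of the other $n-m(C)$ edges is independently present with probability $q$, so $\Pr[C \subseteq G] = q^{n-m(C)}$. Summing over $C$ yields
\[
\frac{E[H(G)]}{E(n,p)} \ =\ \Bigl(\frac{q}{p}\Bigr)^{n}\, E_C\bigl[q^{-m(C)}\bigr],
\]
with $C$ uniformly random over Hamiltonian cycles of $K_n^r$. Writing $m(C)=\sum_{j=1}^{n} X_j$, where $X_j$ indicates that the consecutive positions $j,\ldots,j+r-1$ of $C$ all lie in $S$, one has $\mu := E[m(C)] \asymp s^r/n^{r-1}$. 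The indicators $X_j$ are only locally dependent, and a Chen--Stein-type estimate shows $m(C)$ is well-approximated by $\mathrm{Poisson}(\mu)$ in total variation. Inserting the Poisson moment generating function and expanding to leading order in $\mu/n$ produces a bound of the form
\[
\frac{E[H(G)]}{E(n,p)} \ \ge\ \exp\!\Bigl(c_p\, \mu^{2}/n\ -\ O\!\bigl(\mu^{3}/n^{2}\bigr)\Bigr)
\]
for a positive constant $c_p$ depending on $p$.

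Choosing $s$ so that $\mu$ is as large as this approximation allows, while keeping $s$ small enough that both quasi-randomness and a useful second moment bound on $H(G)$ go through, gives the claimed exponents. For $r \ge 4$ one can push $\mu$ up to roughly $n^{3/4}$ (i.e.\ $s \asymp n^{1-1/(4r)}$, which is $o(n)$), making $\mu^2/n \asymp n^{1/2}$ and yielding $\gamma = 1/2 - o(1)$. For $r=3$ the tighter constraints on the Poisson error and on the variance of $H(G)$, and possibly the need for a refined construction using several disjoint planted cliques rather than a single one, reduce the admissible $\mu$ and so force only $\gamma = 1/3 - o(1)$. The principal obstacle is two-fold: (i) carrying out the Poisson / MGF approximation rigorously at the desired scale of $\mu$, controlling the $O(\mu^{3}/n^{2})$ and higher-order correction terms, and (ii) bounding $\mathrm{Var}(H(G))$ by analysing the overlap structure of pairs of Hamiltonian cycles sharing random edges, so that Paley--Zygmund (or a sharper concentration inequality) extracts from the distribution an explicit $G$ that is simultaneously $(\epsilon,p)$-quasi-random and satisfies $H(G)/E(n,p) \ge 2^{\Omega(n^{\gamma})}$.
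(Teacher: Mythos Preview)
Your approach is genuinely different from the paper's. The paper does not use a planted clique at all; instead it builds an $(n,r,k,\beta,\delta)$-packing of $K_n^r$ into many pairwise edge-disjoint dense $r$-graphs on $q\approx n^{\beta}$ vertices (with $\beta=\frac12-\delta$ for $r\ge4$ and $\beta=\frac13-\delta$ for $r=3$), groups these into $k$-tuples, and forms $G$ by selecting $\ell$ members of each $k$-tuple. This gives density exactly $p=\ell/k$. For a ``good'' permutation $\pi$, the probability that its cycle survives is $p^{f(\pi)}$ where $f(\pi)\le n-g(\pi)$ and $g(\pi)$ counts consecutive $(r{+}1)$-tuples landing inside one block. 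An AM--GM step turns the average of $p^{f(\pi)}$ into $p^{\overline f}$, and the exponents $1/2$ and $1/3$ fall out of the constraint that two disjoint edges of $S(\pi)$ rarely land in different blocks of the same group (Lemma~\ref{l:num-bad-general}), which forces $2-r+r\beta<0$. No second-moment argument is used: since every $G$ in the support has density exactly $p$ and is quasi-random with probability $1-o(1)$ (indeed $1-e^{-n^{1+c}}$), the maximum of $H(G)$ over quasi-random $G$ is at least $\mathbb E[H(G)]$ up to a negligible correction.

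Your plan has a substantive gap at the Poisson step. You invoke a Chen--Stein total-variation bound and then ``insert the Poisson moment generating function''. But a TV bound controls expectations of \emph{bounded} test functions, whereas $q^{-m(C)}$ is unbounded and the MGF is governed entirely by the upper tail of $m(C)$. Worse, at $\mu=n^{3/4}$ the Chen--Stein error is of order $\sum_j\sum_{0<|i|<r}\mathbb E[X_jX_{j+i}]\asymp n(s/n)^{r+1}=\mu\,(s/n)$, which diverges, so even the TV statement fails at the scale you need. And the bound you extract, $\exp(c_p\mu^2/n)$, arises only after the leading terms $\pm\mu(1-p)/p$ from $(q/p)^n$ and from the Poisson MGF cancel \emph{exactly}; you are therefore relying on second-order accuracy of an approximation whose first-order validity you have not established. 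In fact the indicators $X_j$ clump: given $X_j=1$, $\Pr[X_{j+1}=1]\approx s/n$, not $(s/n)^r$, so $m(C)$ is overdispersed and its law is closer to a compound Poisson with geometric cluster sizes, which has a different MGF at the relevant order.

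The $r=3$ claim is not explained at all. In the paper the exponent $\frac13$ comes from a concrete structural constraint ($r\beta<r-2$). In your framework there is no visible mechanism singling out $r=3$; ``tighter constraints on the Poisson error and on the variance'' is not an argument, and you should expect a referee to reject this step outright. Finally, your second-moment route is both unexecuted and unnecessary: once density is arranged to be deterministically $p$ (which the paper does and you do not), and quasi-randomness fails with probability $e^{-\omega(n)}$, one simply passes from $\mathbb E[H(G)]$ to the maximum over quasi-random $G$ without any variance computation.
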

So not only is this in sharp contrast with the $r=2$ case, but even if we require the graphs to be quasi-random we can still have more Hamiltonian cycles than the expectation value by an exponential factor. 
If we do not require quasi-randomness, the exponent can be made linear in $n$, as the following theorem asserts.
\begin{theorem}\label{t:hypergraphs-2}
	Let $p \in (0,1)$ be fixed. Then for all $r \ge 3$,
	$$
	\frac{H_r(n,p)}{E(n,p)} \ge \left(\frac{(k-r+1)!k^{r-1}}{k!}\right)^{n-o(n)}
	$$
	where $k \ge r+1$ is the least integer satisfying $\frac{k!}{(k-r)!k^r} > p$.
\end{theorem}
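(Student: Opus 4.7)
The plan is to build an $r$-graph of density $p$ with many Hamiltonian cycles by starting from a highly symmetric ``labelled'' construction and then randomly thinning. Fix $k$ as in the statement, so $k\ge r+1$ and $q_k:=k!/((k-r)!k^r)>p$, and assume $k\mid n$ for simplicity. Partition $V=[n]$ into $k$ colour classes $V_1,\ldots,V_k$ of size $n/k$, let $\chi(v)$ denote the colour of $v$, and let $G_0$ be the $r$-graph whose edges are those $r$-subsets meeting each colour class at most once. A direct count gives $|E(G_0)|=\binom{k}{r}(n/k)^r$; in particular $|E(G_0)|/\binom{n}{r}\to q_k>p$, so $|E(G_0)|>p\binom{n}{r}$ for $n$ large.

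An ordering $(v_1,\ldots,v_n)$ of $V$ is an oriented Hamiltonian cycle of $G_0$ iff the colour word $(\chi(v_1),\ldots,\chi(v_n))$ has every cyclic $r$-window consisting of distinct colours. For each such word with each colour used exactly $n/k$ times there are $[(n/k)!]^k$ vertex orderings that realise it, so
\[
H(G_0)\ge \frac{L\cdot[(n/k)!]^k}{2n},
\]
where $L$ is the number of length-$n$ colour words on $[k]$ with each cyclic $r$-window distinct and each colour appearing exactly $n/k$ times. I bound $L$ via the transfer matrix $A$ on states equal to ordered $(r-1)$-tuples of distinct colours: a step drops the first coordinate and appends any colour not in the current state, so $A$ is $(k-r+1)$-regular on $k^{\underline{r-1}}$ states, and is irreducible and aperiodic because $k\ge r+1$. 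The total number of valid colour words (ignoring balance) equals $\operatorname{tr}(A^n)=(k-r+1)^n(1+o(1))$ by Perron--Frobenius. Since $A$ commutes with every permutation of the colour alphabet, the marginal distribution of each colour count is concentrated at $n/k$; a local-CLT estimate for this finite irreducible aperiodic Markov chain then shows that balanced words form a $1/\operatorname{poly}(n)$ fraction of all valid words, so $L\ge(k-r+1)^{n-o(n)}$. Combining with Stirling's estimate $[(n/k)!]^k\ge(n-1)!\,k^{-n}/\operatorname{poly}(n)$ yields
\[
H(G_0)\ge\Bigl(\tfrac{k-r+1}{k}\Bigr)^{n-o(n)}\frac{(n-1)!}{2}.
\]

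To force the density to be exactly $p$, take $G'$ to be a uniformly random subgraph of $G_0$ with exactly $m:=p\binom{n}{r}$ edges. For any fixed Hamiltonian cycle of $G_0$ (which uses $n$ edges), the probability that all $n$ of its edges lie in $G'$ equals $\binom{|E(G_0)|-n}{m-n}/\binom{|E(G_0)|}{m}$, which is $\Theta((p/q_k)^n)$ since $n\ll m$ (using $r\ge 3$). Hence $\mathbb{E}[H(G')]\ge \Omega((p/q_k)^n)\cdot H(G_0)$, and the algebraic identity $(k-r+1)/(kq_k)=\alpha:=(k-r+1)!\,k^{r-1}/k!$ rearranges this to $\mathbb{E}[H(G')]\ge \alpha^{n-o(n)}E(n,p)$. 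The probabilistic method then produces a subgraph of $G_0$ with exactly $m$ edges (hence density exactly $p$) and at least $\alpha^{n-o(n)}E(n,p)$ Hamiltonian cycles, as required.

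The main obstacle is the counting estimate $L\ge(k-r+1)^{n-o(n)}$: the top eigenvalue of $A$ being $k-r+1$ is immediate from regularity, but showing that the balance constraint costs only a subexponential factor requires a careful local-CLT argument for the joint distribution of the $k$ colour counts along a length-$n$ closed walk, exploiting the colour-permutation symmetry of $A$. Everything else---the Stirling estimate, the algebraic identity, and the deletion step---is routine.
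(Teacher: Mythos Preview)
Your proposal follows the same architecture as the paper: the graph $G_0$ is exactly the balanced complete $k$-partite $r$-graph $T_r(n,k)$ used in Section~5, the reduction of $H(G_0)$ to counting balanced cyclic colour words with distinct $r$-windows is identical to the paper's reduction to ``good words'', and your random-thinning step is precisely Lemma~\ref{l:q-p}. The algebraic identity and Stirling estimate are also the same.

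The one genuine difference is how you establish $L\ge (k-r+1)^{n-o(n)}$. The paper works with \emph{linear} admissible words of length $t=n-\Theta(\sqrt{n})$, applies Azuma's inequality to the Doob martingale of a single letter count to show that at least half of all admissible words have every letter within $k\sqrt{t}$ of $t/k$, and then gives an explicit combinatorial extension of each such ``feasible'' word to a balanced cyclic word of length $n$ by appending $O(\sqrt{n})$ carefully chosen letters. Your route instead computes $\operatorname{tr}(A^n)$ for the $(k-r+1)$-regular transfer matrix and appeals to a multivariate local CLT for the colour counts along a closed walk to get a $1/\mathrm{poly}(n)$ fraction of balanced words. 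Both arguments are valid; the paper's is more elementary and fully self-contained, while yours is cleaner in outline but leans on a local CLT for additive functionals of a finite Markov chain conditioned to be a closed walk, which---as you correctly flag---requires checking non-degeneracy of the covariance and the lattice condition (both follow from the colour-permutation symmetry and irreducibility, but do need to be stated). The paper's explicit extension also handles the case $k\nmid n$ directly, whereas you would need a small additional remark for that.
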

In the forthcoming sections we first prove the graph theoretic theorem \ref{t:graphs} in Section 2.
In Section 3, we will prove a version of Theorem \ref{t:hypergraphs} that applies only to $r=3$
and to infinitely many $n$, but not all $n$. That version has an important feature of being highly symmetric as the constructed quasi-random $3$-graph uses a certain spherical-geometric combinatorial design (which is known to exist only for $r=3$). Some of the ideas used in the proof of Theorem \ref{t:hypergraphs} become simpler in this case,
so Section 3 also serves as a gentle introduction to the proof of Theorem \ref{t:hypergraphs} appearing in Section 4. The proof of Theorem \ref{t:hypergraphs-2} is in Section 5.

\section{Graphs and \texorpdfstring{$H_2(n,p)$}{H2(n,p)}}

In this section we prove Theorem \ref{t:graphs}.

\subsection{Upper bound}
As mentioned in the introduction, the upper bound of Theorem \ref{t:graphs} follows almost directly from a result of Alon \cite{alon-1990} which we state after introducing the following notation.
For a graph $G$, let $A_G$ denote its adjacency matrix. Recall that $A_G$ is a symmetric binary
matrix whose rows and columns are indexed by $V(G)$ and $A_G(i,j)=1$ if and only if $ij \in E(G)$.
A {\em generalized $2$-factor} of $G$ is a spanning subgraph whose components are simple cycles
and single edges. Let $F(G)$ denote the number of generalized $2$-factors of $G$,
let $F_k(G)$ denote the number of generalized $2$-factors of $G$ with exactly $k$ cycles,
and notice that $F_1(G)=H(G)$. Recall that the {\em permanent} of a square matrix $A$, denoted by $Per(A)$  is its unsigned determinant. It is immediate to verify that
\begin{equation}\label{e:1}
Per(A_G) = \sum_{k=0}^{\infty} 2^k F_k(G)\;.
\end{equation}
By Br\'egman's Theorem \cite{bregman-1973}, for a binary $n \times n$ matrix $A$ it holds
that $Per(A) \le \Pi_{i=1}^n (r_i!)^{1/r_i}$ where $r_i$ is the number of ones in row $i$.
Now, as proved by Alon, subject to $A$ having $2p\binom{n}{2}$ ones, 
$Per(A)$ is maximized if the $r_i$ are as equal as possible; namely, if $p(n-1)$ is an integer
then all $r_i$ equal
$p(n-1)$ and otherwise some of them equal $\lfloor p(n-1) \rfloor$ and some of them
equal $\lceil p(n-1) \rceil$.
Plugging this into Br\'egman's bound, Alon deduced that \footnote{In fact, Alon proved this for the case $p=\frac{1}{2}$ but the exact same argument holds for every $p \in (0,1)$.}
$$
Per(A) \le (1+o(1))\frac{1}{e}(\sqrt{2\pi})^{\frac{1}{p}-1}n^{\frac{1}{2}+\frac{1}{2p}}p^{n+\frac{1}{2p}}(n-1)!\;.
$$
Now, if $G$ is a graph with $n$ vertices and density $p$ then $A_G$ indeed has precisely
$2p\binom{n}{2}$ nonzero entries. It therefore follows from (\ref{e:1}) and the last inequality that
\begin{align*}
H(G) = F_1(G) & \le \frac{1}{2}Per(A_G) \\
& \le (1+o(1))\frac{1}{e}(\sqrt{2\pi})^{\frac{1}{p}-1}p^{\frac{1}{2p}}
n^{\frac{1}{2}+\frac{1}{2p}}p^{n}\frac{(n-1)!}{2}\\
& = (1+o(1))c_p n^{\frac{1}{2}+\frac{1}{2p}}E(n,p)
\end{align*}
where $c_p = \frac{1}{e}(\sqrt{2\pi})^{\frac{1}{p}-1}p^{\frac{1}{2p}}$.

\subsection{Lower bound}

It will be convenient to use the following lemma that says that once we know the number of Hamiltonian cycles in a certain $r$-graph with density $q$, we can use that $r$-graph to obtain a lower bound for $H_r(n,p)$ for $p \le q$.
\begin{lemma}\label{l:q-p}
Let $0 \le p \le 1$ be given. Suppose  that $G$ is an $n$-vertex $r$-graph with density $q \ge p$.
Then $H_r(n,p) \ge (p/q)^n e^{-2/p} H(G)(1-o(1))$.
\end{lemma}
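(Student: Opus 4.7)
The plan is to construct an $n$-vertex $r$-graph $G'$ with density exactly $p$ from $G$ by random sparsification, and then to bound from below the expected number of Hamilton cycles of $G'$. Set $N = q\binom{n}{r} = |E(G)|$ and $m = p\binom{n}{r}$ (which we may assume to be an integer, since otherwise $H_r(n,p)$ is undefined). The construction is to sample $G'$ uniformly at random from the family of $m$-edge subgraphs of $G$; then $G'$ automatically has density exactly $p$, so $H_r(n,p) \ge H(G')$ with probability one.

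By linearity of expectation,
$$
\mathbb{E}[H(G')] \;=\; H(G)\cdot \Pr[C \subseteq G']
$$
for any fixed Hamilton cycle $C$ of $G$, and the latter probability equals
$$
\frac{\binom{N-n}{m-n}}{\binom{N}{m}} \;=\; \prod_{i=0}^{n-1}\frac{m-i}{N-i} \;=\; \Bigl(\frac{p}{q}\Bigr)^{n}\prod_{i=1}^{n-1}\frac{1-i/m}{1-i/N}.
$$
Since $m \le N$, each factor $(1-i/N)^{-1} \ge 1$, so it suffices to lower-bound $\prod_{i=1}^{n-1}(1-i/m)$.

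For $r \ge 3$, each $i/m = O(n^{1-r})$ and $\sum_{i=1}^{n-1} i/m = O(n^{2-r}) = o(1)$, so this product is $1-o(1)$ and the bound $e^{-2/p}(1-o(1))$ is automatic. For $r=2$ we compute $\sum_{i=1}^{n-1} i/m = n(n-1)/(2m) = 1/p$, and since $\max_i i/m = O(1/n) \le 1/2$ for $n$ large, the inequality $\ln(1-x) \ge -2x$ on $[0,1/2]$ yields $\prod_{i=1}^{n-1}(1-i/m) \ge e^{-2/p}$. In both cases,
$$
\mathbb{E}[H(G')] \;\ge\; \Bigl(\frac{p}{q}\Bigr)^n e^{-2/p}\, H(G)\,(1-o(1)),
$$
so some realization $G'$ achieves at least this many Hamilton cycles, proving the claim.

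The whole argument is essentially a probability computation, and I do not anticipate a substantive obstacle; the only mildly delicate point is ensuring the simple estimate $\ln(1-x) \ge -2x$ is applicable uniformly over $i \le n-1$, which follows because $i/m$ is bounded away from $1$ for all relevant $i$ once $n$ is large. The loss from the factor $e^{-2/p}$ (as opposed to a sharper $e^{-1/p+1/q}$ that a slightly more careful expansion would give) is deliberate slack built into the statement and does not cost us anything downstream.
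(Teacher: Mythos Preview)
Your proof is correct and follows essentially the same approach as the paper: random sparsification to a uniform $m$-edge subgraph of $G$, followed by linearity of expectation over the Hamilton cycles of $G$. The only cosmetic difference is that the paper lower-bounds each conditional factor $\frac{m-i}{N-i}$ uniformly by $\frac{p}{q}\bigl(1-\frac{2/p}{n}\bigr)$ and then takes the $n$th power, whereas you write the exact product $\prod_{i}\frac{1-i/m}{1-i/N}$ and estimate it via $\ln(1-x)\ge -2x$; both routes arrive at $(p/q)^n e^{-2/p}(1-o(1))$.
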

\begin{proof}
Let $G$ be an $r$-graph with $n$ vertices and density $q$ where $q \ge p$.
We construct graphs with density precisely $p$ as follows.
Define a symmetric probability space ${\cal P}(G,p)$ over all the spanning subgraphs of $G$ with
precisely $p\binom{n}{r}$ edges\footnote{Recall, we assume $p\binom{n}{r}$ is an integer.}. Observe that since $G$ has at least $q\binom{n}{r} \ge p\binom{n}{r}$ edges,
${\cal P}(G,p)$ is well-defined.

We compute the expected number of Hamiltonian cycles in ${\cal P}(G,p)$ which we denote by
$\mathbb{E}[G,p]$ and note that this lower-bounds $H_r(n,p)$.
Fix some Hamiltonian cycle $C$ of $G$ and let $G^* \sim {\cal P}(G,p)$.
We compute the probability that $C$ corresponds to a Hamiltonian cycle of $G^*$, so
let $X_C$ be the corresponding indicator random variable for this event.
We expose the $n$ edges of $C$ one by one, to see if they are present in $G^*$.
The probability that the first exposed edge is an edge of $G^*$ is $p/q$ since this is the fraction of edges of $G$ taken to $G^*$.
The probability that $i$th exposed edge is an edge of $G^*$ {\em given} that the previous ones were edges
of $G^*$ is 
\begin{align*}
\frac{p\binom{n}{r}-i+1}{q\binom{n}{r}-i+1} & \ge \frac{p\binom{n}{r}-n+1}{q\binom{n}{r}-n+1}\\
& \ge \frac{pn/2-1}{qn/2-1}\\
& = \frac{p}{q}\left(1-\frac{q/p-1}{qn/2-1}\right)\\
& > \frac{p}{q}\left(1-\frac{2/p}{n}\right)\;.
\end{align*} 
It follows that
$$
\Pr[X_C=1] \ge (p/q)^n\left(1-\frac{2/p}{n}\right)^n = (p/q)^n e^{-2/p}(1-o(1))\;.
$$
Hence we obtain that
$$
\mathbb{E}[G,p] \ge H(G)(p/q)^n e^{-2/p}(1-o(1))\;.
$$
\end{proof}

We first prove the lower bound for the case $p=\frac{1}{2}$. In this case the construction
is explicit. First notice that any good construction should not have small cuts, as every Hamiltonian cycle must go through an edge of the cut and that severely limits the number of possible
Hamiltonian cycles. Going to the other extreme, it seems as a good idea to try a complete balanced bipartite graph but this has density slightly above $1/2$.
Nevertheless, since when $n$ is even a complete balanced bipartite graph with $n/2$ vertices in each side has
$(n/2)!(n/2-1)!/2$ Hamiltonian cycles which is slightly above the expectation value, it is profitable
to consider slight variations of it.
It turns out that by changing a small amount of edges and non-edges, we can obtain an almost complete balanced almost bipartite graph with density at most $1/2$ and which already has many Hamiltonian cycles, in fact an amount that is a polynomial factor larger than $H_2(n,\frac{1}{2})$. 

Suppose first that $n$ is even.
Construct the graph $B_n$ by taking a $K_{n/2,n/2}$ and removing a perfect matching
($B_n$ is also known as the {\em crown graph}).
Observe that the density of this graph is actually slightly smaller than $1/2$.
Suppose that the sides of $B_n$ are $X$ and $Y$ and that the vertices of $X$ are the odd integers in
$[n]$ and the vertices of $Y$ are the even integers in $[n]$. Then we can assume that the edges \footnote{We denote edges connecting $x$ and $y$ by $xy$ unless this is confusing, in which case the notation $(x,y)$ is used.}
$(2i-1,2i)$ are missing from $B_n$ for $i=1,\ldots,n/2$ and all other edges of the form $ij$ where
$i \in X$ and $j \in Y$ are in $B_n$.
The number of Hamiltonian cycles in $B_n$ has a well-known closed formula expressed by summation (see \cite{alekseyev-2016}). Here we lower-bound this amount with a summation-free expression
and in terms of $E(n,{\textstyle \frac{1}{2}})$.

Consider some permutation $\pi \in S_n$. We call $\pi$ {\em good} if (i) all the vertices of $X$ are in the odd locations of $\pi$ and (ii) for $i=1,\ldots,n/2$ it holds that location $\pi^{-1}(2i-1)$ is not adjacent
to location $\pi^{-1}(2i)$ (we think of adjacency as cyclic so locations $n$ and $1$ are considered adjacent).
For example, if $n=8$ then the permutation $14527638$ is good since $1,3,5,7$ are in the odd locations,
location $\pi^{-1}(1)=1$ is not adjacent to location $\pi^{-1}(2)=4$, similarly $\pi^{-1}(3)=7$ is not adjacent to $\pi^{-1}(4)=2$, $\pi^{-1}(5)=3$ is not adjacent to $\pi^{-1}(6)=6$, $\pi^{-1}(7)=5$ is not adjacent to $\pi^{-1}(8)=8$.
Clearly, by the definition of $B_n$, every good $\pi$ corresponds to a Hamiltonian cycle of $B_n$.
Let $P(n)$ denote the number of good permutations. Hence, $H(G) \ge P(n)/n$ as the automorphism group of an undirected  Hamiltonian cycle is of order $2n$ but in good permutations the first vertex is  always from $X$.

We now turn to computing $P(n)$. Suppose we have placed all $n/2$ elements of $Y$ in the even locations of
the permutation. There are $(n/2)!$ ways to do that. It now remains to place the elements of $X$ in the
remaining (odd) locations, but subject to the location adjacency restrictions. As we would not like, say, $1$ to be next to $2$, this designates two locations in which $1$ is not allowed to be placed. Similarly, each of $1,3,5,\ldots,n-1$ is forbidden two locations. Define a bipartite graph $Q$ on sides $X$ and $Z$
where $Z$ are all the $n/2$ locations to place the elements of $X$ and there is an edge between $x \in X$
and $z \in Z$ if we are not allowed to place $x$ in location $z$. So, the vertices $1,3,\ldots,n-1$
of $X$ have degree $2$ in $Q$ and
each vertex of $Z$ also has degree $2$ since a location is not allowed for precisely two elements of $X$. Hence, $Q$ is just a $2$ factor (in fact, $Q$ is a Hamiltonian cycle).

It is now a standard inclusion-exclusion argument to determine the number of ways we can place
the elements of $X$ subject to the restrictions defined by $Q$. An alternative convenient way to count this
is as follows. Define an $n/2 \times n/2$ matrix $A_{Q}$ where the rows are indexed by $X$ and the columns by $Z$, and $A_{Q}(i,j)=1/(n/2-2)$ if $ij$ is not an edge of $Q$ (so in particular,
$i$ is allowed to be placed in location $j$). Otherwise, $A_{Q}(i,j)=0$.
As each row and column of $A_{Q}$ has only two zero entries, $A_{Q}$ is a doubly stochastic matrix.
By the well-known Theorems of Erorychev \cite{erorychev-1981} and Falikman \cite{falikman-1981}, $Per(A) \ge (n/2)!/(n/2)^{n/2}$. As each nonzero
permutation of $A_{Q}$ equals $(1/(n/2-2))^{n/2}$ and each such permutation corresponds to a placement of the elements of $X$ in allowed locations, we have that
\begin{align}
P(n) & \ge (n/2)! \cdot \frac{(n/2)!}{(n/2)^{n/2}} \cdot (n/2-2)^{n/2} \nonumber \\
&  \ge \frac{1}{e^2}\left((n/2)!\right)^2(1-o(1)) \nonumber\\
& = \frac{\sqrt{2\pi}}{e^2}n^{3/2} \frac{(n-1)!}{2^{n+1}}(1-o(1)) \label{e:2}\;.
\end{align}
It follows that
\begin{equation}\label{e:3}
H_2(n,{\textstyle \frac{1}{2}}) \ge \frac{\sqrt{2\pi}}{e^2}n^{1/2} \frac{(n-1)!}{2^{n+1}}(1-o(1)) =
\frac{\sqrt{2\pi}}{e^2}n^{1/2} E(n,{\textstyle \frac{1}{2}})(1-o(1))\;.
\end{equation}

Next we consider the case where $n$ is odd. In this case we define $B_n$ as follows.
Take $B_{n-1}$ (which is already defined since $n-1$ is even) and add
a new vertex named $n$. Connect vertex $n$ to precisely $\lfloor 3(n-1)/4 \rfloor$ other vertices (it does not matter which).
The number of edges of $B_n$ is $\lfloor n(n-1)/4 \rfloor$ so its density is at most $1/2$.
Now, consider some good permutation of $S_{n-1}$ and recall there are $P(n-1)$ such permutations.
We can extend this permutation to a permutation of $S_n$ in $n$ possible ways by placing vertex $n$
somewhere. However, we would not like to place $n$ in locations that are adjacent to its non-neighbors.
Since vertex $n$ has only $\lceil (n-1)/4 \rceil$ non-neighbors, there are at most $(n+1)/2$ non-allowed locations, which leaves at least $(n-1)/2$ possible locations to place vertex $n$. Now observe that each such permutation
corresponds to a Hamiltonian cycle of $B_n$. Hence, if $P(n)$ denotes the number of good permutations in this case, we have $P(n) \ge P(n-1) \cdot (n-1)/2$, so by (\ref{e:2})
\begin{align*}
P(n) & \ge \frac{n-1}{2} P(n-1)\\
&   \ge \frac{n-1}{2} \frac{\sqrt{2\pi}}{e^2}n^{3/2} \frac{(n-2)!}{2^{n}}(1-o(1))\\
& = \frac{\sqrt{2\pi}}{e^2}n^{3/2} \frac{(n-1)!}{2^{n+1}}(1-o(1))\;.
\end{align*}
Hence, (\ref{e:3}) holds in this case as well.

We next consider densities other than $1/2$. Assume first that $p < \frac{1}{2}$ is fixed and that $n$ is such that $p\binom{n}{2}$ is an integer. 
We will use Lemma \ref{l:q-p} with $G=B_n$ as above, and observe that $q\le 1/2$ and that for
$n$ sufficiently large, $p < q$. As we have just proved that the right hand side of (\ref{e:3}) lower bounds $H(B_n)$, the lemma implies that
$$
H_2(n,p) \ge (2p)^n e^{-2/p} \frac{\sqrt{2\pi}}{e^2}n^{1/2} E(n,{\textstyle \frac{1}{2}})(1-o(1))
= \frac{\sqrt{2\pi}}{e^{2+2/p}}n^{1/2}E(n,p)(1-o(1))\;.
$$

Assume next that $p > 1/2$. Recall the Tur\'an Graph $T(n,k)$ which is the complete balanced $k$-partite
graph where every part is of size $\lceil n/k \rceil $ or $\lfloor n/k \rfloor$.
Observe that its density is only slightly larger than $(k-1)/k$. In fact, its density is smaller than
$(1+2/n)(k-1)/k$. The number of Hamiltonian cycles in $T(n,k)$ corresponds to the set of cyclic permutations of the vertices of $T(n,k)$ where no two adjacent vertices from the same part are next to each other (again, each Hamiltonian cycle corresponds to $2n$ such permutations). This latter problem is an old problem posed by Smirnov that was solved asymptotically in \cite{smirnov-1966} (in fact, it was solved for all complete multipartite graphs, not just balance ones). In particular, it follows from \cite{smirnov-1966} that
\begin{equation}\label{e:tkn}
H(T(n,k)) = (1-o(1))c_k n^{1/2} E(n,{\textstyle \frac{k-1}{k}})
\end{equation}
where $c_k$ is a small constant depending only on $k$. 
Let, therefore, $k$ be the least integer such that $(k-1)/k > p$
and that $n$ is such that $p\binom{n}{2}$ is an integer.
We use Lemma \ref{l:q-p} with $G=T(n,k)$.
The lemma, together with (\ref{e:tkn}) implies that
\begin{align*}
H_2(n,p) & \ge \left(\frac{pk}{(k-1)(1+2/n)}\right)^n e^{-2/p} (1-o(1))c_k n^{1/2} E(n,{\textstyle \frac{k-1}{k}})\\
& = \left(\frac{1}{(1+2/n)}\right)^n e^{-2/p} (1-o(1))c_k n^{1/2} E(n,p)\\
& = (1+o(1))c_p n^{1/2} E(n,p)\\
\end{align*}
where $c_p$ depends only on $k$ and $p$, and therefore only on $p$.
\qed

\section{\texorpdfstring{$3$}{3}-graphs and \texorpdfstring{$H_3(n,p,\epsilon)$}{H3(n,p,epsilon)} using Steiner systems}

In this section we prove a slightly weaker version of Theorem \ref{t:hypergraphs} that applies only to the case $r=3$. This version has the property
that the constructed $3$-graph can be partitioned into isomorphic $p$-dense ``chunks'' in a highly symmetric way that we do not have for $r \ge 4$. Nevertheless, many of the ingredients appearing in the proof of
Theorem \ref{t:hypergraphs} are already present in the proof we present in this section.
For simplicity, we will assume in this section that $1/p$ additionally satisfies certain divisibility constraints but it is very easy to deduce the result for all $p \in (0,1)$ using Lemma \ref{l:q-p}, or as we show in the
proof of Theorem \ref{t:hypergraphs} in the following section. We will also assume that $n$ is of a certain form (and infinitely many integers have this form).
Again, it is not too difficult to extend the result here to all $n$ at the price of some technical modifications.
But as the main result in Section 4 already handles the general case (also for $r=3$), we prefer to keep things simple in this section. The exact statement of the theorem we prove here follows.
\begin{theorem}\label{t:special}
	Let $\epsilon > 0$ and let $p \in (0,1)$ be such that $1/p$ is an integer.
	Suppose that $n=q^4+1$ where $q$ is a prime power and $pq^3(q^6+q^4+q^2+1)$ is an integer (this occurs, say, if $q$ is a power of $2$ and $p=\frac{1}{2}$). Then:
	$$
	\frac{H_3(n,p,\epsilon)}{E(n,p)} \ge (1/p)^{\Theta(n^{1/4})}\;.
	$$	
\end{theorem}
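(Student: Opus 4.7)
My plan is to build $G$ from the inversive plane (spherical M\"obius plane) of order $s=q^2$, which is a Steiner system $S(3,s+1,s^2+1)$ on the $n=q^4+1$ vertices: every triple of points lies in a unique ``circle'' of size $q^2+1$, and the total number of circles is $q^2(q^4+1)$. I would take $\mathcal{S}$ to be a subset of circles of cardinality exactly a $p$-fraction of the total; the divisibility hypothesis on $pq^3(q^6+q^4+q^2+1)$ is arranged precisely so that the needed integer counts work out (and Lemma~\ref{l:q-p} can absorb residual slack if one wishes to bypass the hypothesis). Setting
\[
E(G)=\bigcup_{C\in\mathcal{S}}\binom{C}{3},
\]
the density of $G$ is then exactly $p$ because every triple lies in a unique circle.

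For $(\epsilon,p)$-quasi-randomness, I would sample $\mathcal{S}$ uniformly among balanced subsets. For any $W\subset V$ with $\lfloor n/2\rfloor$ vertices,
\[
|E(G[W])|=\sum_{C\in\mathcal{S}}\binom{|C\cap W|}{3},
\]
a sum of nearly independent contributions with mean $p\binom{|W|}{3}$. A Chernoff-type concentration bound together with a union bound over the $2^n$ possible $W$ then yields $(\epsilon,p)$-quasi-randomness with positive probability.

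For the Hamiltonian count, the central structure is a \emph{circle chain}: a sequence $C_1,\dots,C_m$ of $m=q^2+1$ circles of $\mathcal{S}$ with $|C_i\cap C_{i+1}|=2$ for every $i$, $C_i\cap C_j=\emptyset$ for non-consecutive $i,j$, and $\bigcup_i C_i=V$. Such a chain yields many tight Hamiltonian traversals of $G$: enter $C_i$ through the pair $C_{i-1}\cap C_i$, order its $q^2-3$ interior vertices freely, exit through $C_i\cap C_{i+1}$; every three consecutive vertices then lie in a single selected circle, so each triple is an edge of $G$, and one closes the cycle using $O(1)$ further selected circles. Each chain therefore contributes at least $((q^2-3)!)^{q^2+1}$ tight Hamiltonian cycles, and summing over chains assembled from $\mathcal{S}$ gives the final lower bound.

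The main obstacle is this last step: producing enough chains inside a single $\mathcal{S}$ so that (number of chains)$\,\times\,$(cycles per chain), divided by $E(n,p)=p^n(n-1)!/2$, reaches $(1/p)^{\Theta(q)}=(1/p)^{\Theta(n^{1/4})}$. One has to exploit that every pair of points lies in $q^2+1$ circles and that the inversive plane contains many near-disjoint circle chains, so the chain count overcomes both the cost $p^{m+O(1)}$ of demanding that every chain circle be selected and the Stirling-type gap between $((q^2-3)!)^{q^2+1}$ and $(q^4)!$. A careful enumeration --- simpler and more symmetric here than the general argument in Section 4 --- should deliver the target exponent while preserving quasi-randomness, since the chains ultimately involve only a vanishing fraction of $\mathcal{S}$.
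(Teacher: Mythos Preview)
Your approach has a fatal quantitative gap in the chain enumeration --- precisely the ``main obstacle'' you flag but do not resolve. A chain of $m=q^2+1$ circles yields at most on the order of $((q^2)!)^{m}$ tight Hamiltonian cycles, and by Stirling
\[
\frac{((q^2)!)^{q^2+1}}{(q^4)!}\;=\;q^{-2q^4+O(q^2)}\,.
\]
The number of chains one can assemble, even from all $q^2(q^4+1)$ circles rather than just those in $\mathcal S$, is only $q^{O(q^2)}$: at each of the $m$ steps there are at most $\binom{q^2+1}{2}\cdot q^2=O(q^6)$ choices for the next circle. Hence the total number of chain-structured Hamiltonian cycles in $K_n^3$ is at most $q^{-2q^4+O(q^2)}(n-1)!$, and only a $p^{m}\approx p^{q^2}$ fraction of these lie in $G$. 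Dividing by $E(n,p)=p^{n}(n-1)!/2$ leaves a ratio of at most $(1/p)^{q^4}\cdot q^{-2q^4+O(q^2)}$, which tends to $0$ because $q^{-2q^4}$ overwhelms $(1/p)^{q^4}$ for large $q$ (compare exponents: $2q^4\ln q$ versus $q^4\ln(1/p)$). The Stirling gap is not bridgeable by counting chains. A smaller issue: the divisibility hypothesis that $pq^3(q^6+q^4+q^2+1)$ be an integer is exactly the condition that $p|L|$ is integral for the system $S(3,q+1,q^4+1)$, whose block count is $|L|=q^3(q^6+q^4+q^2+1)$; it is not tailored to the inversive plane $S(3,q^2+1,q^4+1)$ you use, which has only $q^2(q^4+1)$ circles, so your claim that the hypothesis makes your integer counts ``work out'' is incorrect.

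The paper's argument is entirely different and does not single out any special class of Hamiltonian cycles. It uses $S(3,q+1,q^4+1)$, groups the blocks into $1/p$-tuples of pairwise vertex-disjoint blocks (via Hajnal--Szemer\'edi), and randomly selects one block per group; this gives density exactly $p$ with independence across groups. One then averages over \emph{all} permutations $\pi$: for a good $\pi$, the probability that $S(\pi)$ is a Hamiltonian cycle of $G$ is exactly $p^{f(\pi)}$, where $f(\pi)$ counts the groups touched by $S(\pi)$. By AM--GM, $\mathbb E[H(G)]\ge \frac{|\L|}{2n}\,p^{\overline f}$, and the key calculation is that $\overline f\le n-\Theta(n^{1/4})$: on average $\Theta(n^{1/4})$ consecutive edge-pairs of a random permutation land in the same block, since each pair does so with probability $(q-2)/(n-3)\approx n^{-3/4}$ and there are $n$ such pairs. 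This yields the bound directly, with no explicit cycle construction.
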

\begin{proof}
Recall that a Steiner system $S(r,k,n)$ is a set $L$ of $k$-element subsets of $[n]$
such that every $r$-element subset of $[n]$ is contained in exactly one element of $L$.
The elements of $L$ are called {\em blocks}.
In hypergraph terminology, it is a {\em decomposition} of $K_n^r$ into pairwise edge-disjoint subgraphs isomorphic to $K_k^r$.
While for $r \ge 4$ no infinite families of Steiner systems are known for large $k$ that is {\em polynomial} in $n$,
for $r=3$ such a family can be constructed from spherical geometries. In particular, if $q$ is a prime power and
$s > 0$ is an integer then an $S(3,q+1,q^s+1)$ exists \cite{CD-2010,hughes-1965}.
For our construction here we will use $s=4$, namely $S(3,q+1,q^4+1)$.
Notice that the number of blocks in this case is $|L|=q^3(q^6+q^4+q^2+1) \approx q^9$.

Recall that in our theorem's statement we assume that $n=q^4+1$ where $q$ is a prime power and that our density $p$ is such that $pq^3(q^6+q^4+q^2+1)$ is an integer and that $1/p$ is an integer.
Consider an $S(3,q+1,n)$ and its set of blocks $L$ and notice that $|L| \approx n^{9/4}$ and that $p|L|$
is an integer.
We can further partition $L$ into $p|L|$ parts denoted by $L_1,\ldots,L_t$ where
$t=p|L|$ and each $L_i$ consists of precisely $1/p$ blocks of size $q+1$ each.
However, we will require our partition to satisfy a stronger property:
for each $L_i$, all the $1/p$ blocks in $L_i$ are pairwise disjoint.
It is not difficult to find such a partition. Consider a graph $M$ whose vertex set is $L$
and two blocks are connected if they have nonempty intersection. The number of vertices in this graph
is $|L| \approx n^{9/4}$. As each $v \in [n]$ is in precisely $\binom{n-1}{2}/\binom{q}{2}$ blocks,
the maximum degree of $M$ is at most $(q+1)\binom{n-1}{2}/\binom{q}{2} < n^2$.
Hence by the Hajnal-Szemer\'edi Theorem \cite{HS-1970}, if $n$ is sufficiently large (recall that $p$ is constant) $M$ has $|L|p$ pairwise disjoint independent sets of order $1/p$ each. 

We now construct our $3$-graph $G$ with edge density $p$ which will be used to prove a lower bound for
$H_3(n,p,\epsilon)$. The vertex set of $G$ is $[n]$, namely the same vertex set used above for
the Steiner system. From each $L_i$ choose one of its $1/p$ blocks at random (all $t$ choices made independently), and add to $G$ all possible edges that this block contains (namely, a copy of $K_{q+1}^3$).
The remaining $1/p-1$ blocks from each $L_i$ remain edgeless. Notice that the obtained $3$-graph has density precisely $p$, as required. Furthermore since each $3$-set is an edge with probability $p$ and the event that a given $3$-set
is an edge depends only on  $O(q^3/p)=o(n^3)$ other $3$-sets, it is straightforward that for every $\epsilon > 0$, if $q$ (and hence $n$) is sufficiently large then $G$ is $(\epsilon,p)$-quasi random with probability $1-o(1)$. We will prove that with high probability, $G$ as above has many Hamiltonian cycles, enough to yield Theorem \ref{t:special}.

Consider a permutation $\pi$ of $[n]$ together with its $3$-set $S(\pi)$ (recall that the
$3$-set of a permutation consists of the $n$ edges of the Hamiltonian cycle in $K_n^3$ corresponding to the permutation).
We call $\pi$ {\em good} if no $L_i$ contains two elements of $S(\pi)$ in distinct blocks of $L_i$. Otherwise it is {\em bad}.
Note that in a good permutation, $L_i$ is allowed to contain several elements of $S(\pi)$ as long as they are all in the same block. Clearly, for every bad permutation $\pi$, the corresponding $S(\pi)$ is not a Hamiltonian cycle of $G$ regardless of the coin tosses that generated $G$.
For a good permutation $\pi$ let
$$
f(\pi) = |\{i\,:\, \exists B \in L_i, \exists e \in S(\pi),  e \subset B\}|\;.
$$
So $f(\pi)$ is the number of parts $L_i$ that contain a block that covers some element of $S(\pi)$.
Observe that trivially $f(\pi) \le |S(\pi)|=n$.
\begin{lemma}\label{l:prob}
	Let $\pi$ be a good permutation. The probability that $S(\pi)$ is a Hamiltonian cycle of $G$
	is $p^{f(\pi)}$.
\end{lemma}
\begin{proof}
	For every $i$ such that $L_i$ contains at least one element of $S(\pi)$, the probability that all of the elements of $S(\pi)$ appearing in $L_i$ occur as edges in $G$ is $p$. As two $3$-sets from distinct parts are independent with respect to the event of being an edge of $G$, the lemma follows.
\end{proof}
We will prove that most permutations are good and, moreover, that for most good permutations, the corresponding $f(\pi)$ is significantly smaller than $n$. Together with Lemma \ref{l:prob} this will imply the existence of many Hamiltonian cycles in some $G$.

\begin{lemma}\label{l:num-bad}
	The number of bad permutations is at most $n! \cdot \Theta(n^{-1/4})$.
\end{lemma}
\begin{proof}
	It is equivalent, although more convenient, to upper-bound the probability that
	a random permutation $\pi \in S_n$ is bad. So, let $\pi \in S_n$ be drawn at random and recall that its
	$3$-set $S(\pi)$ contains $n$ elements.
	Let $e,f \in S(\pi)$ be two distinct elements. First observe that if $e \cap f \neq \emptyset$ then
	$e$ and $f$ never appear in two different blocks of the same $L_i$ since all blocks in $L_i$ are
	vertex-disjoint. So we can assume that $e \cap f = \emptyset$.
	Suppose that $e$ is in some block of some $L_i$. What is the probability that $f$ is in one of the other
	$1/p-1$ blocks of $L_i$? Let $B$ be such a block. As the $3$-set $f$ is a completely random triple of
	$[n] \setminus e$, the probability that $f \subset B$ is $(q+1)q(q-1)/(n-3)(n-4)(n-5)$.
	Hence, by the union bound, the probability that $e$ and $f$ are in two distinct blocks of the same part
	is at most
	$$
	({\textstyle \frac{1}{p}}-1)\frac{(q+1)q(q-1)}{(n-3)(n-4)(n-5)} = \Theta(n^{-9/4})\;.
	$$
	As there are less than $n^2$ pairs of distinct elements of $S(\pi)$ it follows again from the union bound and the last inequality that the probability that $\pi$ is bad is at most $\Theta(n^{-1/4})$.
\end{proof}

Let $\L$ denote the set of good permutations. Then by the last lemma, $|\L| \ge n! \cdot (1-\Theta(n^{-1/4}))$. Observe also that by Lemma \ref{l:prob} the random variable $H(G)$ which is the number of Hamiltonian cycles in $G$ satisfies
\begin{equation}\label{e:EHG}
\mathbb{E}[H(G)] = \frac{1}{2n}\sum_{\pi \in \L} p^{f(\pi)}\;.
\end{equation}
Let $\overline{f}$ denote the average of $f(\pi)$ ranging over all $\pi \in \L$. Then we have:
\begin{lemma}\label{l:amgm}
\begin{equation}\label{e:amgm}
\mathbb{E}[H(G)] \ge \frac{|\L|}{2n}p^{\overline{f}}\;.
\end{equation}
\end{lemma}
\begin{proof}
	The lemma follows from (\ref{e:EHG}) and the inequality of arithmetic and geometric means.
\end{proof}
So, our remaining task is to lower-bound (\ref{e:amgm}), i.e., the right-hand side of the inequality
in Lemma \ref{l:amgm}.
To this end, define for $\pi \in S_n$ the value $g(\pi)$ to be the number of {\em consecutive} pairs of elements of $S(\pi)$ that fall into the same block. Note that a consecutive pair corresponds to a consecutive $4$-tuple of $\pi$ (where $\pi$ is viewed cyclically).
Observe that if $\pi$ is good, then $f(\pi) \le n - g(\pi)$ as the
worst case is when all the elements of $S(\pi)$ in a given block are consecutive.
For example, suppose that $\pi=id$, that $g(\pi)=3$, that $(1,2,3),(2,3,4),(3,4,5)$ are in the same block $B$ and
that $(6,7,8),(7,8,9)$ are in the same block $B'$. Then, if $B \neq B'$ it could be that the remaining $n-5$ elements of $S(\pi)$ are in other distinct blocks so we can only promise that
$f(\pi) \le n-3$ while if $B=B'$ we know that $f(\pi) \le n-4$.
Let $\overline{g}$ denote the average of $g(\pi)$
ranging over all $\pi \in \L$ and let $\overline{g^*}$ denote the average of $g(\pi)$
ranging over all $n!$ permutations. Since $f(\pi) \le n - g(\pi)$ for $\pi \in \L$
we have that $\overline{f} \le n - \overline{g}$.
We will compute $\overline{g^*}$, then we will prove that
$\overline{g^*}$ is very close to $\overline{g}$ and then we will use $\overline{f} \le n - \overline{g}$ to prove an estimate for $\overline{f}$ which will complete our task using (\ref{e:amgm}).

\begin{lemma}\label{l:g^*}
$$
\overline{g^*} = n(q-2)/(n-3) = \Theta(n^{1/4})\;.
$$
\end{lemma}
\begin{proof}
Again, it is convenient to consider a permutation $\pi$ chosen at random from $S_n$.
Consider two consecutive elements of $S(\pi)$. Denote them by $(a,b,c)$ and $(b,c,d)$. The probability that
$(b,c,d)$ is in the same block as $(a,b,c)$ is precisely $(q-2)/(n-3)$ as $d$ has to be chosen to the remaining $q-2$ elements of the block containing $(a,b,c)$ out of the remaining $n-3$ vertices $[n] - \{a,b,c\}$. As there are $n$ distinct pairs of consecutive elements of $S(\pi)$, the expected value of
$g(\pi)$ for a randomly chosen permutation is $n(q-2)/(n-3)$. As this expected value is
precisely $\overline{g^*}$, the lemma follows.
\end{proof}

\begin{lemma}\label{l:g}
	$\overline{g} \ge \overline{g^*} - \Theta(n^{1/12}) $.
\end{lemma}
\begin{proof}
Consider some $\pi \in S_n$ chosen at random from $S_n$.
Let $C(\pi)$ denote the $n$ $4$-tuples $(a,b,c,d)$ such that $(a,b,c)$ and $(b,c,d)$ are
consecutive elements of $S(\pi)$. 
Note that, equivalently, $C(\pi)$ can be viewed as the set of consecutive pairs of elements of $S(\pi)$.
A subset $S \subset C(\pi)$ is {\em independent} if any two elements of $S$
are disjoint (so the union of two elements of $S$ consists of $8$ vertices).
For a subset $S \subset C(\pi)$ we say that it is $|S|$-bad
if it is independent and for each $4$-tuple in $S$, the two consecutive edges forming the $4$-tuple appear in the same
block. Our goal is to prove that if $|S|$ is large, the probability of being $|S|$-bad is small.

Consider some $S \subset C(\pi)$ with $s=|S|$ and suppose that $S=\{(x_i,y_i,z_i,w_i) ~|~ i=1,\ldots,s\}$.
Stated otherwise, we pick $s$ positions of $\pi$ at least four apart each and look at the $s$ $4$-tuples where each $4$-tuple
is a sequence of $4$ consecutive elements of $\pi$ starting at one of the chosen positions.
For example, if $n=9$, $\pi=314827956$ and $s=2$ and we are looking at positions $3,7$ then
$S=\{(4,8,2,7),(9,5,6,3)\}$.
What is the probability that $S$ is $s$-bad? (we may assume that the $s$ $4$-tuples in $S$ are
independent otherwise it is not bad by definition; in the last example they are indeed independent).
Consider first the $4$-tuple $(x_1,y_1,z_1,w_1)$. The probability that all the four vertices fall in the same block (meaning that $(x_1,y_1,z_1),\, (y_1,z_1,w_1)$ are two consecutive elements of $S(\pi)$ falling in the same block) is $(q-2)/(n-3)$.
Given that the four elements of $(x_i,y_i,z_i,w_i)$ fall in the same block for $i=1,\ldots,r-1$ (distinct
$4$-tuples may or may not fall in the same block) what is the probability
that also the four vertices of $(x_r,y_r,z_r,w_r)$ fall in the same block?
Suppose we are given the information to which block each of the $4r-1$ vertices of
$x_1,y_1,z_1,w_1,x_2,y_2,z_2,w_2\ldots,x_{r-1},y_{r-1},z_{r-1}w_{r-1},x_r,y_r,z_r$ 
belongs. The probability that $w_r$ also falls in the block to which $x_r,y_r,z_r$ belong is thus at most $(q-2)/(n-4r+1)$.
Hence, the probability that $S$ is $s$-bad is at most
$$
\left(\frac{q-2}{n-4s+1}\right)^s\;.
$$
Let us say that $\pi$ is {\em $s$-bad} if it contains some independent $S \subset C(\pi)$
that is $s=|S|$ bad. Thus, the probability that a randomly chosen $\pi$ is $\lfloor n^{1/3} \rfloor$-bad is at most
$$
\binom{n}{\lfloor n^{1/3} \rfloor} \left(\frac{q-2}{n-4\lfloor n^{1/3} \rfloor+1}\right)^{\lfloor
	n^{1/3} \rfloor}  \ll \frac{1}{n^2}
$$
where we have only used here that $n$ is sufficiently large and that $q = \Theta(n^{1/4})$.
In other words, we have proved that the number of $\lfloor n^{1/3} \rfloor$-bad permutations
is at most $n!/n^2 \le (n-2)!$.

We will say that $\pi$ is {\em very bad} if there is a set of at least $6n^{1/3}$ (not necessarily disjoint) elements of $C(\pi)$ such that for each $4$-tuple in this set, all its four vertices appear in the same block. Observe that since each $4$-tuple in $C(\pi)$ intersects only $6$ other $4$-tuples (recall that the $4$-tuples are consecutive elements of $\pi$), a very bad $\pi$ is also
$\lfloor n^{1/3} \rfloor$-bad, and in particular there are at most $(n-2)!$ very bad permutations.

Let us now bound $\sum_{\pi \notin{\cal L}} g(\pi)$.
By Lemma \ref{l:num-bad}, we know that the number of $\pi \notin{\cal L}$ is at most $n! \cdot \Theta(n^{-1/4})$.
We have just shown that at most $(n-2)!$ of them are very bad (in fact, at most $(n-2)!$ out of all permutations, not just bad ones)
and for them we will use the trivial bound $g(\pi) \le n$.
The others are not very bad, so for them we have $g(\pi) \le 6n^{1/3}$.
Thus,
$$
\sum_{\pi \notin{\cal L}} g(\pi) \le n! \cdot \Theta(n^{-1/4})6n^{1/3}+(n-2)!n\;.
$$
Hence
$$
\frac{\sum_{\pi \notin{\cal L}} g(\pi)}{n!} \le \Theta(n^{1/12})\;.
$$
Now,
\begin{align*}
	\overline{g} & =  \frac{n! \overline{g^*} - \sum_{\pi \notin{\cal L}} g(\pi)}{|{\cal L}|}  \\
	& \ge \overline{g^*} - \frac{\sum_{\pi \notin{\cal L}} g(\pi)}{|{\cal L}|} \\
	& \ge  \overline{g^*} - \frac{2\sum_{\pi \notin{\cal L}} g(\pi)}{n!} \\
	& \ge   \overline{g^*} - \Theta(n^{1/12})\;.
\end{align*}
\end{proof}

We now return to (\ref{e:amgm}) and obtain, using Lemma \ref{l:g^*} and Lemma \ref{l:g}, that
$$
\mathbb{E}[H(G)] \ge \frac{|\L|}{2n}p^{\overline{f}} \ge \frac{|\L|}{2n}p^{n-\overline{g}}
\ge \frac{|\L|}{2n}p^{n-\overline{g^*}+\Theta(n^{1/12})} = \frac{|\L|}{2n}p^{n-\Theta(n^{1/4})}\;.
$$\
Now, since $|\L| \ge n!/2$, since $E(n,p)=p^n(n-1)!/2$ and since
$H_3(n,p,\epsilon) \ge \mathbb{E}[H(G)]$ we obtain from the last inequality that
$$
\frac{H_3(n,p,\epsilon)}{E(n,p)} \ge (1/p)^{\Theta(n^{1/4})}\;.
$$
\end{proof}

\section{Proof of Theorem \ref{t:hypergraphs}}

The major obstacle when trying to generalize the proof of Theorem \ref{t:special} to larger $r$, is that for
$r \ge 4$ we do not have a nice decomposing object such as
a Steiner system with blocks of {\em polynomial} size, as we have in the case $r=3$ where we have used $S(3,q+1,q^4+1)$ where $q^4=\Theta(n)$\footnote{Although Theorem \ref{t:special} was proved for the case
$n=q^4+1$, it is not too difficult to generalize it so that it holds for all $n$, since for every $n$ there is an $n'$ of the form $q^4+1$ where $q$ is a prime power and $n/16 \le n' \le n$, so it is always true that a constant proportion of the $3$-edges of $K_n^3$ can be packed by the Steiner system $S(3,q+1,q^4+1)$; this suffices in order to obtain a result that extends Theorem \ref{t:special} to all $n$.}.
In the case $r=3$, the set of blocks of $S(3,q+1,q^4+1)$  can each be viewed as a set of pairwise edge-disjoint complete $3$-graphs on $q+1$ vertices. This makes the construction and proofs of the various lemmas in the previous section rather smooth. To compensate for the lack of such structure when $r \ge 4$,
we can replace the blocks (i.e. replace pairwise edge-disjoint complete $r$-graphs) with pairwise edge-disjoint {\em sufficiently dense} $r$-graphs of polynomial order that still cover a non-negligible proportion of all $r$-sets of our $n$-set.
This idea is formalized in Definition \ref{d:packing}. We note that the construction we are about to show also works for $r=3$ and, in fact, gives a somewhat stronger result than the one obtained in Theorem \ref{t:special}.

It will be convenient to reformulate Theorem \ref{t:hypergraphs} in the following equivalent way.
\begin{theorem}\label{t:hypergraphs-reform}
	Let $\delta >0$, $\epsilon > 0$, $p \in (0,1)$ and $r \ge 3$ be fixed. Then,
	$$
	\frac{H_r(n,p,\epsilon)}{E(n,p)} \ge 2^{\Omega(n^{\gamma})}
	$$
	where $\gamma=\frac{1}{2}-3\delta$ if $r \ge 4$ and $\gamma=\frac{1}{3}-3\delta$ if $r=3$.
\end{theorem}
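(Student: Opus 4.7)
The plan is to follow the template of the proof of Theorem \ref{t:special}, but with the spherical-geometric Steiner system replaced by a more flexible packing object (Definition \ref{d:packing}): a family $\mathcal{B} = \{B_1, \ldots, B_N\}$ of pairwise edge-disjoint $r$-graphs, each supported on an $m$-element subset of $[n]$ and of density $1 - o(1)$ within its own vertex set, that together cover a constant fraction of the edges of $K_n^r$. Such a packing is obtained from a near-perfect packing of $K_n^r$ into copies of $K_m^r$ via the Pippenger--Spencer / Frankl--R\"odl nibble applied to the auxiliary packing hypergraph, whose codegree-to-degree ratio is at most $(m-r)/(n-r) = o(1)$ for any $m = o(n)$. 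Once $\mathcal{B}$ is fixed, $G$ is constructed almost verbatim as in Section 3: arrange the blocks into groups $L_1, \ldots, L_t$ of $1/p$ pairwise vertex-disjoint blocks each --- possible via Hajnal--Szemer\'edi applied to the intersection graph of $\mathcal{B}$, whose maximum degree is $O(m^2 N/n)$ and hence supports groups of size $1/p$ whenever $m = O(\sqrt{n})$ --- then pick one block independently and uniformly from each $L_i$ and let $G$ be the union of their edges. Non-integral divisibility of $1/p$, and the passage from the special $p$'s to arbitrary $p \in (0,1)$, are handled via Lemma \ref{l:q-p} exactly as in Section 3.

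The counting argument then mirrors the four lemmas of Section 3 with only the Steiner-system parameters replaced by generic $m$. A permutation $\pi$ is good if no $L_i$ contains edges of $S(\pi)$ in two distinct blocks; the probability that a random $\pi$ is bad is at most
$$
n^2 \cdot \left(\tfrac{1}{p} - 1\right) \cdot \Theta\!\left(\left(\tfrac{m}{n}\right)^r\right) \;=\; \Theta\!\left(\frac{m^r}{n^{r-2}}\right),
$$
which is $o(1)$ exactly when $m = o(n^{1 - 2/r})$. I then compute $\overline{g^*} = n \cdot \Theta(m/n) = \Theta(m)$, since the probability that the $(r+1)$st vertex of a consecutive $(r+1)$-tuple of a uniform $\pi$ lies in the unique block containing the first $r$ is $\Theta(m/n)$. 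The concentration $\overline{g} \ge \overline{g^*} - o(n^\gamma)$ then follows as in Lemma \ref{l:g} from the independent sub-collection / ``very bad'' argument: the probability that $s$ vertex-disjoint consecutive $(r+1)$-tuples all fall inside common blocks is at most $(em/s)^s$, negligible once $s$ is a suitable multiple of $m$. Applying Lemma \ref{l:amgm} now yields $\mathbb{E}[H(G)] \ge \tfrac{|\L|}{2n}\, p^{n - \Theta(n^\gamma)}$, whence $H_r(n,p,\epsilon)/E(n,p) \ge 2^{\Omega(n^\gamma)}$.

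The main obstacle, and the source of the $r = 3$ versus $r \ge 4$ dichotomy, is the interplay of two competing constraints on the block size $m$: the Hajnal--Szemer\'edi grouping caps $m$ at $n^{1/2 - o(1)}$ irrespective of $r$, while the bad-permutation estimate above caps $m$ at $n^{1 - 2/r - o(1)}$. For $r \ge 4$ the former is the binding constraint (since $1 - 2/r \ge 1/2$), giving $\gamma = 1/2 - 3\delta$; for $r = 3$ the latter takes over, forcing $\gamma = 1/3 - 3\delta$. A comparatively routine task, which I would handle alongside the construction of $G$, is verifying $(\epsilon, p)$-quasi-randomness: because the event ``$e \in G$'' for any $r$-set $e$ depends only on the $O(m^r) = o(n^r)$ other $r$-sets in the same block of $\mathcal{B}$, a union bound over $\lfloor n/2 \rfloor$-subsets $W$ combined with Chernoff on the independent per-group block indicators delivers the required density concentration.
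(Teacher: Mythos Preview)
Your outline follows the paper's template closely and you have correctly identified the source of the $r=3$ versus $r\ge 4$ dichotomy (Hajnal--Szemer\'edi on the block-intersection graph versus the bad-permutation estimate). Two points, however, are handled differently in the paper and are genuine gaps in your sketch as written.

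First, the paper does \emph{not} invoke the nibble. The auxiliary packing hypergraph you describe is $\binom{m}{r}$-uniform with $m=n^{\Theta(1)}$, so its uniformity grows polynomially in $n$; the classical Pippenger--Spencer and Frankl--R\"odl statements are for bounded uniformity, and appealing to them here is not routine. Instead the paper gives a self-contained random construction (Lemma~\ref{l:exists}): sample $K\approx n^{r-r\beta}$ random $q$-subsets $V_1,\dots,V_K$, and for each edge of $K_n^r$ covered by several $V_i$ keep it in at most one, chosen with probability $1/M$ where $M=\Theta(n^{\delta})$ dominates the maximum multiplicity. The resulting blocks are edge-disjoint but only \emph{nearly} complete, satisfying the codegree condition (ii) of Definition~\ref{d:packing} up to a factor $n^{-\delta}$; this is why the paper obtains $\overline{g^*}=\Theta(n^{\beta-2\delta})$ rather than $\Theta(n^{\beta})$, and is the origin of the extra $2\delta$ in $\gamma=\beta-2\delta$.

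Second, your $G$ has density $p$ times the covered fraction, hence strictly below $p$, and Lemma~\ref{l:q-p} only lowers density, so it cannot repair this; moreover Lemma~\ref{l:q-p} says nothing about quasi-randomness, so even when applicable it bounds $H_r(n,p)$, not $H_r(n,p,\epsilon)$. The paper avoids both problems at once: it writes $p=\ell/k$ with $\gcd(\ell,k)=1$, partitions the blocks into groups $L_i$ of size $k$ (Lemma~\ref{l:hs}) \emph{and} partitions the uncovered edges $W$ into $k$-tuples $W_j$ of pairwise disjoint edges (Lemma~\ref{l:hs-w}), then selects $\ell$ blocks from each $L_i$ and $\ell$ edges from each $W_j$. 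This yields density exactly $p$ for every rational $p$, and the uniform per-part success probability $\ell/k=p$ feeds directly into Lemma~\ref{l:prob-general}. The definition of a good permutation and the bad-permutation bound (Lemma~\ref{l:num-bad-general}) are correspondingly extended to forbid two elements of $S(\pi)$ sharing a $W_j$.
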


Prior to proving Theorem \ref{t:hypergraphs-reform} we need to set up some definitions.
For an $r$-graph $B$ and for a set $X$ of size $1 \le |X| \le r-1$ of vertices of $B$, the {\em degree} of $X$ in $B$ is the number of edges of $B$ that contain $X$. Notice that the degree of $X$ can be at most
$\binom{|V(B)|-|X|}{r-|X|}$.
We now present the definition that replaces the Steiner system used in Theorem \ref{t:special}
to a family of sufficiently dense edge-disjoint $r$-graphs of polynomial size, that we will use in the proof of
Theorem \ref{t:hypergraphs-reform}.
\begin{definition}\label{d:packing}
	For reals $\beta,\delta > 0$ and for positive integers $k \ge 1$, $r \ge 3$ and $n$, 
	an $(n,r,k,\beta, \delta)$-packing of $K_n^r$ is a set $L$ of pairwise edge-disjoint $r$-subgraphs of $K_n^r$ such that the following holds.\\
	(i) Every element of $L$ is an $r$-graph with $q=\lceil n^\beta\rceil$ vertices.\\
	(ii) If $B \in L$ and $X$ is a set of size $1 \le |X| \le r-1$ of vertices of $B$, then the degree of $X$ in $B$ is at least $n^{-\delta}\binom{q-|X|}{r-|X|}$.\\
	(iii) At most a fraction of $\frac{1}{2}$ of the edges of $K_n^r$ are in some element of $L$.\\
	(iv)  $|L| \ge n^{r-r\beta}$ and $|L|$ is a multiple of $k$.\\
	(v) The number of edges of all elements of $L$ is the same.
\end{definition}
\begin{lemma}\label{l:exists}
	Let $0 < \delta < \beta < 1$, $k \ge 1$, $r \ge 3$.
	Then, for all $n$-sufficiently large, an $(n,r,k,\beta,\delta)$-packing of $K_n^r$ exists.
\end{lemma}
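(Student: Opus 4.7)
I plan to build $L$ by a two-stage probabilistic construction. Set $q=\lceil n^\beta\rceil$, fix constants $p_0,C>0$ with $Ckp_0<\tfrac12$, and put $M:=Ck\lceil n^{r-r\beta}\rceil$, a multiple of $k$ of order $n^{r-r\beta}$. First sample $V_1,\dots,V_M$ independently and uniformly from $\binom{[n]}{q}$; then, independently for each $i$, form a random subgraph $B_i^{(0)}$ of $K_q^r[V_i]$ by keeping each $r$-edge with probability $p_0$; finally obtain $B_i$ from $B_i^{(0)}$ by deleting every edge that also belongs to some $B_j^{(0)}$ with $j\neq i$. By construction the $B_i$'s are pairwise edge-disjoint, which is the crucial structural property underlying (iii), and each has vertex set $V_i$ of size $q$, giving (i).

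An elementary first-moment calculation, using $\Pr[e\subseteq V_j]\sim(q/n)^r$ and writing $\lambda:=(M-1)p_0(q/n)^r\to Ckp_0<\tfrac12$, yields, for any fixed $V_i$ and any $X\subseteq V_i$ with $1\le|X|\le r-1$,
\[
\mathbb{E}\bigl[\deg_{B_i}(X)\bigr]\;\sim\;p_0\,e^{-\lambda}\binom{q-|X|}{r-|X|},
\qquad
\mathbb{E}\!\left[\sum_i|B_i|\right]\;\sim\;\lambda e^{-\lambda}\binom{n}{r}.
\]
Since $\lambda e^{-\lambda}<\tfrac12$ by choice of $C$ and $p_0$, the expected total edge count satisfies (iii); and since the expected codegree is a positive constant times $\binom{q-|X|}{r-|X|}$, it exceeds the threshold $n^{-\delta}\binom{q-|X|}{r-|X|}$ of (ii) by a polynomial slack of order $n^{\delta}$.

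To upgrade the expectations to pointwise statements, decompose $\deg_{B_i}(X)=\deg_{B_i^{(0)}}(X)-(\text{edges of $B_i^{(0)}$ through $X$ killed by conflicts})$. The first term is a sum of independent Bernoullis, so Chernoff concentrates it tightly around $p_0\binom{q-|X|}{r-|X|}$. For the conflict term, conditioning on $V_i$ and $B_i^{(0)}$, it is a sum of weakly dependent indicators with mean at most $\lambda\cdot\deg_{B_i^{(0)}}(X)$ and small pairwise covariances---two such indicators are independent unless the two conflicted edges can sit simultaneously inside a common $V_j$---so a Chebyshev bound gives concentration. Combining, each single codegree (and each $|B_i|$) lies in $[\tfrac12,2]\cdot\mathbb{E}$ with probability $1-o(n^{-r})$, and a union bound over the at most $M\cdot n^{O(1)}=n^{O(1)}$ events shows that with positive probability \emph{every} $B_i$ simultaneously satisfies the lower bound in (ii) and also $\sum_i|B_i|\le\tfrac12\binom{n}{r}$.

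Fix such a realization. Conditions (i)--(iv) are then in place; for (v), remove arbitrary edges from each $B_i$ down to the common size $m^\star:=\min_i|B_i|$. Because the codegree lower bounds had a polynomial slack of $n^{\delta}$ and because $|B_i|\le 2\mathbb{E}|B_i|$ while $m^\star\ge \tfrac12\mathbb{E}|B_i|$, the removal affects each codegree by at most a bounded multiplicative factor and (ii) is preserved. The main obstacle in this plan is the concentration step, since $\deg_{B_i}(X)$ couples the randomness of all the $V_j$'s in a non-independent way; the two-step Chernoff/Chebyshev argument above is the natural workaround and succeeds precisely because the threshold in (ii) is weaker than the expectation by the built-in polynomial slack $n^{\delta}$.
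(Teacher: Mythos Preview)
Your overall architecture is close to the paper's, but there are two genuine gaps.

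\textbf{The trimming step is wrong.} You write that after equalising the sizes by ``removing arbitrary edges from each $B_i$ down to $m^\star$'', the codegree bounds survive because ``the removal affects each codegree by at most a bounded multiplicative factor''. This is false for \emph{arbitrary} removal. With $|B_i|\le 2\mathbb{E}|B_i|$ and $m^\star\ge\tfrac12\mathbb{E}|B_i|$ you may be deleting up to $\tfrac32\mathbb{E}|B_i|=\Theta(q^r)$ edges, while for $|X|=r-1$ the codegree is only $\Theta(q)$; nothing stops you from wiping out every edge through a fixed $(r-1)$-set. The polynomial slack $n^\delta$ is irrelevant here because it is multiplicative, not additive. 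The paper handles this by a two-colouring trick: each surviving edge is independently coloured red or white, Chernoff guarantees that the \emph{red} codegrees already meet the bound in (ii), and then only \emph{white} edges are deleted to equalise sizes (Claims~1--3 in the paper). You need some such device; ``arbitrary'' removal does not work.

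\textbf{Chebyshev is too weak for the union bound.} For the conflict term you invoke ``a Chebyshev bound'' and then claim each codegree lies in $[\tfrac12,2]\cdot\mathbb{E}$ with probability $1-o(n^{-r})$. Take $|X|=r-1$: conditioned on $V_i,B_i^{(0)}$ the conflict count is a sum of $D=\Theta(q)$ indicators with pairwise covariances of order $M(q/n)^{r+1}=O(n^{\beta-1})$, giving $\mathrm{Var}=O(D)$. Chebyshev with deviation $t=\Theta(D)$ yields failure probability $O(1/D)=O(n^{-\beta})$, far short of the $o(n^{-(r-\beta)})$ needed to union-bound over all $M\cdot q^{r-1}$ pairs $(i,X)$. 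The paper sidesteps this dependency entirely: after showing $|C(e)|\le M:=\lfloor n^\delta/4\rfloor$ for every $e$, it assigns each edge uniformly to a slot in $[M]$ and keeps it only in the block whose index matches. This makes the survival probability of every edge exactly $1/M$, independently across edges, so each codegree is a genuine binomial and Chernoff gives exponentially small failure. Your ``delete every multiply-covered edge'' rule does not produce independent survivals, and second moments alone cannot close the gap.
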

\begin{proof}
Throughout the proof we will assume that $n$ is sufficiently large so that the statements involving it hold.
Our construction is probabilistic. Let $q=\lceil n^\beta\rceil$ and let $K = \lceil n^{r-r\beta} \rceil+k'$ so that $0 \le k' < k$ and $K$ is a multiple of $k$. As $q$ will be the size of each element of our constructed packing and $K$ will be the number of elements, Properties (i) and (iv) of Definition \ref{d:packing} hold.
We randomly select (with replacement) $K$ subsets of $[n]$, each of size $q$, denoting the chosen subsets by $V_1,\ldots,V_K$. Let $B_i$ be the subgraph of $K_n^r$ induced by $V_i$, so $B_i$ is isomorphic to $K_q^r$.

Unfortunately, the $B_i$'s are not necessarily edge-disjoint, so, in particular, we cannot claim that $\{B_1,\ldots,B_K\}$ is an $(n,r,k,\beta,\delta)$-packing. We are therefore required to perform several adjustments to our $B_i$'s in order to guarantee the existence of an $(n,r,k,\beta,\delta)$-packing.

For an edge $e \in E(K_n^r)$, let $C(e)=\{i\,|\, e \in E(B_i)\}$ be the set of indices $i$ such that $e$ is an edge of $B_i$ (equivalently $e \subset V_i$).
For a given $i$, the probability that $e \subset V_i$ is $\binom{q}{r}/\binom{n}{r}$. As the $V_i$'s are independently chosen,
we have that $|C(e)|$ has binomial distribution $Bin(K, \binom{q}{r}/\binom{n}{r})$. By our choice of
$K$ and $q$ we have that
$$
\mathbb{E}[|C(e)|]= K\frac{\binom{q}{r}}{\binom{n}{r}} = \Theta(n^{r-r\beta}n^{(\beta-1)r})=\Theta(1)\;.
$$
Hence by a Chernoff bound (see \cite{AS-2004}, A.1.12), we have that
$$
\Pr \left [ |C(e)| > \frac{n^\delta}{4} \right ] < 2^{-\Theta(n^\delta)}.
$$
As there are only $\binom{n}{r} \ll 2^{\Theta(n^\delta)}$ edges in $K_n^r$, we have by the union bound that with high probability,
no edge appears in more than $M=\lfloor \frac{1}{4}n^\delta \rfloor$ distinct $B_i$'s.
So, from here until the end of the proof we assume that this is indeed the case for our $B_i$'s, namely, $|C(e)| \le M$ for all $e \in E(K_n^r)$.

Let $C(e)=\{i_{e,1},\ldots,i_{e,|C(e)|}\}$.
For each $e \in E(K_n^r)$ with $C(e) \neq \emptyset $, we select uniformly at random
an integer $j$ in $\{1,\ldots,M\}$ and proceed as follows. If $j > |C(e)|$ then we remove $e$ from all the
$B_i$'s to which it belongs. If $j \le |C(e)|$ then we remove $e$ from all the
$B_i$'s to which it belongs {\em except} for $B_{i_{e,j}}$. Let $B_i^*$ be the spanning $r$-subgraph of $B_i$ obtained after this edge-removal process.
Observe that now our $B_i^*$ are pairwise edge-disjoint as every edge $e$ is an element of at most one of the $B_i^*$. It is now not difficult (see below) to prove that with high probability each of the $B_i^*$ satisfies the degree condition, namely Property (ii) of Definition \ref{d:packing}. However, we need a bit of additional modifications in order to guarantee that Property (v) of Definition \ref{d:packing} also holds.
To this end, we color each edge of each of the $B_i^*$ either red or white uniformly and independently at random.

We define several random variables.
Let $R(i)$ be the number of red edges of $B_i^*$ and let $W(i)$ be the number of white edges of $B_i^*$.
Observe that each has distribution $Bin(\binom{q}{r}, \frac{1}{2M})$.
Consider some $X \subset V_i$ with $1 \le |X| \le r-1$, and let $R(X)$ be the red degree of $X$ in $B_i^*$.
Let $F(X)$ denote the set of edges of $B_i$ that contain $X$ (namely, the edges before the edge-removal process) and observe that $|F(X)|=\binom{q-|X|}{r-|X|}$.
So, $R(X)$ is the sum of {\em independent} indicator random variables $Y(e)$ for each $e \in F(X)$ where $Y(e)=1$ 
if $e$ is a red edge of $B_i^*$. Observe that $\Pr[Y(e)=1] = 1/(2M)$.
So $R(X)$ has distribution $Bin(\binom{q-|X|}{r-|X|},\frac{1}{2M})$.

All the following three claims are consequences of Chernoff's large deviation inequality for the binomial distribution.

\noindent
{\bf Claim 1.} With probability at least $3/4$, it holds for all $1 \le i \le K$ and for all
	$X \subset V_i$ with $1 \le |X| \le r-1$ that $R(X) \ge n^{-\delta}\binom{q-|X|}{r-|X|}$.\\
{\em Proof.} As $R(X) \sim Bin(\binom{q-|X|}{r-|X|},\frac{1}{2M})$ its expected value
is 
$$
\binom{q-|X|}{r-|X|}\frac{1}{2M} \ge 2\binom{q-|X|}{r-|X|}n^{-\delta} = 
\Omega(n^{\beta(r-|X|)-\delta}) \ge \Omega(n^{\beta-\delta}) \;.
$$
As $\beta > \delta$, the exponent $\beta-\delta$ is positive.
By Chernoff's large deviation approximation (see \cite{AS-2004} Theorem A.1.13), the probability that
$R(X)$ falls below half its expected value is exponentially small in that expected value
hence exponentially small in $n^{\beta-\delta}$. Since the overall number of possible pairs $(i,X)$ is
less than $K q^r$, namely only a polynomial in $n$, we have by the union bound that with probability $1-o_n(1) \ge \frac{3}{4}$, $R(X) \ge n^{-\delta}\binom{q-|X|}{r-|X|}$ for all possible $(i,X)$ pairs.
\qed

\noindent
{\bf Claim 2.} With probability at least $3/4$, it holds for all $1 \le i \le K$ that
$R(i)+W(i) \le \frac{1}{3}\binom{q}{r}$.\\
{\em Proof.}
As $R(i) \sim Bin(\binom{q}{r}, \frac{1}{2M})$ its expected value is
$\Theta(n^{\beta r-\delta}) \ll \frac{1}{12}\binom{q}{r}$.
Since $\beta r -\delta > 0$, we again have by Chernoff's large deviation approximation that the probability that
$R(i)$ is larger than twice its expected value, and in particular larger than $\frac{1}{6}\binom{q}{r}$  is exponentially small in $n^{\beta r-\delta}$. The same holds for $W(i)$. As there are only $K$ pairs
$(W(i),R(i))$ and $K$ is polynomial in $n$, we have by the union bound that with probability $1-o_n(1) \ge \frac{3}{4}$, $R(i)+W(i) \le \frac{1}{6}\binom{q}{r}+\frac{1}{6}\binom{q}{r}=\frac{1}{3}\binom{q}{r}$.
\qed

\noindent
{\bf Claim 3.} With probability at least $3/4$, it holds for all $1 \le i,j \le K$ that
$R(j) \le R(i)+W(i)$.\\
{\em Proof.}
In order for some pair $i,j$ to have $R(j) \ge R(i)+W(i)$ it must hold that either $R(j) \ge 2R(i)$ or
$R(j) \ge 2W(i)$. But each of $R(i),W(i),R(j)$ has distribution $Bin(\binom{q}{r}, \frac{1}{2M})$
whose expectation is $\Theta(n^{\beta r-\delta})$, so in order to have $R(j) \ge 2W(i)$ or $R(j) \ge 2R(i)$
at least one of $R(i),W(i),R(j)$ must deviate by a constant factor from its expected value.
As the probability of this happening is exponentially small in $n^{\beta r-\delta}$ and as there are at most
$K^2$ pairs $(i,j)$ to consider, it follows by the union bound that with probability $1-o_n(1) \ge \frac{3}{4}$,
$R(j) \le R(i)+W(i)$ for all $1 \le i,j \le K$.
\qed

As the statement of each of the last three claims does not hold with probability at most $1/4$ and since there are only three claims, we have that with positive probability all three statements of the last three claims hold.
So let us fix the $B_i^*$ and their red and white edges such that:\\
(a) for all $1 \le i \le K$ and for all
$X \subset V_i$ with $1 \le |X| \le r-1$ it holds that $R(X) \ge n^{-\delta}\binom{q-|X|}{r-|X|}$;\\
(b) for all $1 \le i \le K$ it holds that $R(i)+W(i) \le \frac{1}{3}\binom{q}{r}$;\\
(c) for all $1 \le i,j \le K$ it holds that $R(j) \le R(i)+W(i)$.\\
Let $z = \min_{i=1}^{K} R(i)+W(i)$.
For $1 \le j \le K$ remove from $B_j^*$ an arbitrary set of $R(j)+W(j)-z$ white edges.
This can be done by (c) since if $i$ is such that $R(i)+W(i)=z$ then
$W(j) \ge R(j)+W(j)-z =  R(j)+W(j)-R(i)-W(i)$ follows from (c) by the fact that $R(j) \le R(i)+W(i)$.
Denote by $B_j^{**}$ the spanning $r$-subgraph of $R_j^*$ obtained after this removal of $R(j)+W(j)-z$ white edges.
We claim that $\{B_1^{**},\ldots,B_K^{**}\}$ is an $(n,r,k,\beta,\delta)$-packing of $K_n^r$.
Indeed we have already shown that Properties (i) and (iv) in Definition \ref{d:packing} hold.
Property (v) holds since each $B_j^{**}$ has $z$ edges. Property (ii) follows from (a) and the fact that no
red edges were removed. Property (iii) follows from (b) as the number of edges in all the
$B_i^*$ is at most
$$
K \frac{1}{3}\binom{q}{r} = (1+o_n(1))\frac{1}{3}n^{r-r\beta}\frac{n^{\beta r}}{r!} < \frac{1}{2}\binom{n}{r}\;.
$$
\end{proof}

\noindent
Next we define:
\begin{equation}\label{e:beta}
\beta = \begin{cases}
\frac{1}{2}-\delta & {\rm if~} r \ge 4\\
\frac{1}{3}-\delta & {\rm if~} r = 3\;.
\end{cases}
\end{equation}

\vspace*{2mm}
\noindent
{\bf Proof of Theorem \ref{t:hypergraphs-reform}.}
Let $\delta >0$, $\epsilon > 0$ and $p=\ell/k \in (0,1)$ with $gcd(\ell,k)=1$ be given
and let $r \ge 3$ be given. Let $\beta$ be defined as in (\ref{e:beta}).
Suppose that $n$ is sufficiently large such that
an $(n,r,k,\beta,\delta)$-packing of $K_n^r$, denoted by $L$, exists (this holds by Lemma \ref{l:exists}) and such that $p\binom{n}{r}$ is an integer.
In fact, throughout the proof we will assume without further mention that $n$ is also sufficiently large so that the other stated claims involving it hold.
We need to construct an $(\epsilon,p)$-quasi random $r$-graph with $n$ vertices that contains many Hamiltonian cycles.

Our first task is to partition $L$ into parts of size $k$ each, such that the elements of each part are vertex-disjoint.
\begin{lemma}\label{l:hs}
	There is a partition of $L$ into parts $L_1,\ldots,L_t$ where $t=|L|/k$ and each part is of size $k$.
	Furthermore, any two $r$-graphs in the same part are vertex-disjoint.
\end{lemma}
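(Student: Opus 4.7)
The plan is to mimic the Hajnal--Szemer\'edi argument used for the Steiner system partition in Section~3. Build the auxiliary ``conflict'' graph $M$ whose vertex set is $L$ and in which two elements $B,B'\in L$ are adjacent iff $V(B)\cap V(B')\neq\emptyset$. A partition of $L$ into parts of size $k$ all of whose members are pairwise vertex-disjoint is exactly an equitable $t$-coloring of $M$ with $t=|L|/k$ color classes, so by the Hajnal--Szemer\'edi theorem it suffices to show $\Delta(M)\le t-1$.

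To bound $\Delta(M)$, fix $B\in L$ and a vertex $v\in V(B)$; I want to count how many other members of $L$ contain $v$. By Property~(ii) of Definition~\ref{d:packing} applied with $X=\{v\}$, every $B'\in L$ with $v\in V(B')$ uses at least $n^{-\delta}\binom{q-1}{r-1}$ of the $\binom{n-1}{r-1}$ edges of $K_n^r$ through $v$. Since the members of $L$ are pairwise edge-disjoint, the number of $B'\in L$ containing $v$ is at most
\[
\frac{\binom{n-1}{r-1}}{n^{-\delta}\binom{q-1}{r-1}}\;=\;O\!\left(n^{\delta+(1-\beta)(r-1)}\right).
\]
Summing over the $q=\lceil n^\beta\rceil$ vertices of $B$ and subtracting $B$ itself yields
\[
\Delta(M)\;\le\;q\cdot O\!\left(n^{\delta+(1-\beta)(r-1)}\right)\;=\;O\!\left(n^{\,r-1-(r-2)\beta+\delta}\right).
\]

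The main (and essentially only) obstacle is then a short exponent check: one needs this upper bound to be strictly smaller than $t=|L|/k\ge n^{r-r\beta}/k$, i.e.\ to check that $r-1-(r-2)\beta+\delta < r-r\beta$, which rearranges to $2\beta+\delta<1$. This holds for both choices of $\beta$ in~(\ref{e:beta}): if $r\ge 4$ then $\beta=\tfrac12-\delta$ gives $2\beta+\delta=1-\delta<1$, and if $r=3$ then $\beta=\tfrac13-\delta$ gives $2\beta+\delta=\tfrac23-\delta<1$. Thus for $n$ large enough $\Delta(M)\le t-1$.

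Finally, since $|L|$ is a multiple of $k$ by Property~(iv), applying the Hajnal--Szemer\'edi theorem \cite{HS-1970} to $M$ with $t=|L|/k$ colors produces an equitable partition of $V(M)=L$ into $t$ independent sets, each of size exactly $|L|/t=k$. Reinterpreting: each part consists of $k$ elements of $L$ that are pairwise vertex-disjoint, which is precisely the partition $L_1,\ldots,L_t$ promised by the lemma.
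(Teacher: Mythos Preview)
Your proof is correct and follows essentially the same argument as the paper: build the conflict graph $M$, bound $\Delta(M)$ via Property~(ii) by the quantity $\Theta(n^{\beta+\delta+(1-\beta)(r-1)})$, compare with $|L|\ge n^{r-r\beta}$ using $2\beta+\delta<1$, and apply Hajnal--Szemer\'edi. The only cosmetic difference is that the paper observes $\beta\le\tfrac12-\delta$ in one line rather than splitting into the two cases of~(\ref{e:beta}), but the content is identical.
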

\begin{proof}
	Define a graph $M$ whose vertex set is $L$ and whose edges connect two elements of $L$ that are not vertex-disjoint. We must prove that $M$ has $|L|/k$ pairwise-disjoint independent sets of size $k$ each
	(or, equivalently, that the complement of $M$ has a factor into cliques of order $k$ each).	
	To this end, we upper-bound the maximum degree of $M$ as follows. Consider some $B \in L$. So by Property (i) of Definition \ref{d:packing}, $B$ is an $r$-graph with $q=\lceil n^\beta\rceil$ vertices. Consider some vertex $v \in V(B)$. There are precisely $\binom{n-1}{r-1}$ edges of $K_n^r$ that contain $v$. But if $v$ is a vertex of some element $B' \in L$, then by
	Property (ii) in Definition \ref{d:packing}, $B'$ contains at least $n^{-\delta}\binom{q-1}{r-1}$ edges that contain $v$. So, the number of elements of $L$ other than $B$ that
	contain $v$ is at most $n^\delta\binom{n-1}{r-1}/\binom{q-1}{r-1}=\Theta(n^{\delta+(1-\beta)(r-1)})$. As $B$ has $q$ vertices, the degree of $B$ in $M$ is at most $\Theta(n^{\delta+\beta+(1-\beta)(r-1)})$.
	On the other hand, the number of vertices of $M$, namely $|L|$, is at least $n^{r-r\beta}$ by Property (iv) of Definition \ref{d:packing}.
	But observe that by the definition of $\beta$, we have that $\beta \le \frac{1}{2}-\delta$ so $n^{\beta+\delta+(1-\beta)(r-1)} = o(n^{r-r\beta})$.
	So $M$ is a graph whose number of vertices is much larger than its maximum degree (their ratio tends to infinity with $n$). As $|L|$ is a multiple of $k$ (Property (iv) in Definition \ref{d:packing}) and $k$ is just a constant independent of $n$, it follows from the Hajnal-Szemer\'edi Theorem \cite{HS-1970} that $M$ has $|L|/k$ pairwise disjoint independent sets of order $k$ each.
\end{proof}

Let $W$ denote the set of edges of $K_n^r$ that are {\em not} in any element of $L$.
\begin{lemma}\label{l:hs-w}
	$|W|$ is a multiple of $k$. Furthermore, $W$ can be partitioned into $t'=|W|/k$ parts, denoted by $W_1,\ldots,W_{t'}$, each of size $k$, and the elements of each $W_i$ are pairwise disjoint.
\end{lemma}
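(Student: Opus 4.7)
The plan is to mimic the strategy of Lemma \ref{l:hs}: reduce the claim to a statement about independent sets in an auxiliary graph, and then invoke the Hajnal--Szemer\'edi Theorem. The two things we must check are (a) the divisibility $k \mid |W|$, and (b) that the auxiliary ``conflict'' graph on $W$ has maximum degree much smaller than $|W|/k$.

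First I would verify the divisibility. Writing $p=\ell/k$ with $\gcd(\ell,k)=1$, the assumption that $p\binom{n}{r}$ is an integer forces $k \mid \binom{n}{r}$. By Property (iv) of Definition \ref{d:packing}, $|L|$ is a multiple of $k$, and by Property (v) every $B\in L$ has the same number of edges, call it $z$; since the elements of $L$ are pairwise edge-disjoint, the number of edges of $K_n^r$ covered by $L$ is exactly $|L|\cdot z$, which is a multiple of $k$. Therefore $|W| = \binom{n}{r} - |L|z$ is also a multiple of $k$, and I may set $t' = |W|/k$.

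Next I would set up the auxiliary graph. Let $M'$ be the graph with vertex set $W$ in which two edges $e,f \in W$ (viewed as $r$-subsets of $[n]$) are joined when $e \cap f \neq \emptyset$. A partition of $V(M')$ into independent sets of size $k$ is precisely a partition of $W$ into parts of $k$ pairwise vertex-disjoint edges, so it suffices to produce such a partition. Any fixed $e \in W$ has $r$ vertices, and each vertex of $[n]$ lies in $\binom{n-1}{r-1}$ edges of $K_n^r$, so the degree of $e$ in $M'$ is at most $r\binom{n-1}{r-1}$. On the other hand, by Property (iii) of Definition \ref{d:packing}, $|W| \ge \tfrac{1}{2}\binom{n}{r} = \tfrac{n}{2r}\binom{n-1}{r-1}$, hence
\[
\frac{|W|}{k} \;\ge\; \frac{n}{2rk}\binom{n-1}{r-1} \;>\; r\binom{n-1}{r-1}+1
\]
once $n$ is large enough (recall $r$ and $k$ are fixed). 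So the maximum degree of $M'$ is at most $|W|/k - 1$, and Hajnal--Szemer\'edi \cite{HS-1970} furnishes an equitable coloring of $M'$ with $|W|/k$ color classes, i.e. a partition of $W$ into $t'$ independent sets $W_1,\ldots,W_{t'}$ of size $k$ each. By construction, the $k$ edges in each $W_i$ are pairwise disjoint, which is exactly the claim.

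I don't expect a serious obstacle here: both ingredients (the divisibility count, and the degree-versus-order comparison) are straightforward given the properties packed into Definition \ref{d:packing}. The only mildly delicate point is noticing that Property (v) is needed, together with Property (iv), to conclude $k \mid |L|z$ without knowing $z$ individually; otherwise the divisibility $k \mid |W|$ would not follow.
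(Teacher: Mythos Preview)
Your proposal is correct and follows essentially the same route as the paper: first deduce $k\mid |W|$ from $k\mid \binom{n}{r}$ together with Properties (iv) and (v), then build the intersection graph on $W$, bound its maximum degree by $r\binom{n-1}{r-1}=O(n^{r-1})$ against $|W|\ge \tfrac12\binom{n}{r}$, and apply Hajnal--Szemer\'edi. Your explicit remark that Property (v) is what makes the divisibility go through is a nice touch that the paper leaves implicit.
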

\begin{proof}
Recall that $p \binom{n}{r}$ is an integer , but since $p=\ell/k$ and $gcd(\ell,k)=1$ this implies that
$\binom{n}{r}$ is a multiple of $k$. On the other hand $|W|=\binom{n}{r}-\sum_{B \in L} |E(B)|$.
But by Property (v) in Definition \ref{d:packing}, each term in the sum has the same value and the number of terms is $|L|$ which is a multiple of $k$.
Hence, $|W|$ is a multiple of $k$ as well.
As in the proof of Lemma \ref{l:hs}, we can use the Hajnal-Szemer\'edi Theorem  to obtain the desired partition.
Define a graph $M$ whose vertex set is $W$ and whose edges connect two elements of $W$ that are not disjoint.
Consider some $e \in W$. As $e$ is an $r$-set, consider some vertex $v \in e$.
As there are precisely $\binom{n-1}{r-1}$ edges of $K_n^r$ that contain $v$,
the degree of $e$ in $M$ is $O(rn^{r-1})=O(n^{r-1})$. On the other hand,
the number of vertices of $M$, namely $|W|$, is at least $\Theta(n^r)$ as Property (iii) of Definition
\ref{d:packing} asserts that $|W| \ge \frac{1}{2}\binom{n}{r}$.
So $M$ is a graph whose number of vertices is much larger than its maximum degree (their ratio tends to infinity with $n$). As $|W|$ is a multiple of $k$ and $k$ is just a constant independent of $n$, it follows from the Hajnal-Szemer\'edi Theorem that $M$ has $|W|/k$ pairwise disjoint independent sets of order $k$ each.
\end{proof}

Based on the partitions of $L$ and $W$ obtained in Lemmas \ref{l:hs} and \ref{l:hs-w}, we can now construct our $(\epsilon,p)$-quasi random $r$-graph $G$ on vertex set $[n]$.
From each $L_i$ where $i=1,\ldots,t$ choose precisely $\ell$ of its elements at random, all $t$ choices made independently and overall there are $\ell t$ chosen elements (recall that the elements are $r$-subgraphs of $K_n^r$). Add to $G$ all edges of the $\ell t$ chosen $r$-subgraphs.
The edges belonging to the remaining $k-\ell$ elements of each $L_i$ are {\em non-edges} of $G$.
This defines for all edges of $K_n^r$ other than $W$ whether they are edges or non-edges of $G$.
From each $W_j$ where $j=1,\ldots,t'$, choose precisely $\ell$ of its elements at random, all $t'$ choices made independently and overall there are $\ell t'$ chosen elements (recall that these elements are edges of $K_n^r$).   Add to $G$ all the chosen $\ell t'$ edges. The remaining $k-\ell$ elements of each $W_j$ are non-edges of $G$.
Notice that the obtained $r$-graph $G$ has density precisely $p=\ell/k$, as required.
Furthermore since each $r$-set is an edge with probability $p$ and the event that a given $r$-set
is an edge depends only on $O(k q^r)=o(n^r)$ other $r$-sets, it is straightforward that for $\epsilon > 0$, if $q$ (and hence $n$) is sufficiently large, then $G$ is $(\epsilon,p)$-quasi random with probability $1-o_n(1)$.
We will prove that with high probability, $G$ as above has many Hamiltonian cycles, enough to give Theorem \ref{t:hypergraphs-reform}.

Consider a permutation $\pi$ of $[n]$ together with its $r$-set $S(\pi)$ (recall that the
$r$-set of a permutation consists of the $n$ edges of the Hamiltonian cycle in $K_n^r$ corresponding to the permutation).
We call $\pi$ {\em good} if no $L_i$ contains two edges of $S(\pi)$ in distinct elements of $L_i$
and no $W_j$ contains two elements of $S(\pi)$. Otherwise, it is {\em bad}.
Note that in a good permutation, $L_i$ is allowed to contain several edges of $S(\pi)$ as long as they are all in the same element of $L_i$.
For a good permutation $\pi$ let
$$
f(\pi) = |\{i\,:\, \exists B \in L_i, \exists e \in S(\pi),  e \in E(B)\}|+
|\{j\,:\, \exists e \in W_j \cap S(\pi)\}|\;.
$$
So $f(\pi)$ is the number of parts $L_i$ that contain an element that contains an edge of $S(\pi)$
plus the number of parts $W_j$ that contain an edge of $S(\pi)$.
Observe that trivially $f(\pi) \le |S(\pi)|=n$. The following lemma is analogous to Lemma \ref{l:prob}
since the probability of an element of $L_i$ to be chosen for taking all its edges to $G$ is $p=\ell/k$
and since the probability of an edge of $W_j$ to be chosen as an edge in $G$ is $\ell/k$ as well.
\begin{lemma}\label{l:prob-general}
	Let $\pi$ be a good permutation. The probability that $S(\pi)$ is a Hamiltonian cycle of $G$
	is $(\ell/k)^{f(\pi)}=p^{f(\pi)}$. \qed
\end{lemma}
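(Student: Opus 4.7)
The plan is to verify the claimed probability directly by partitioning the edges of $S(\pi)$ according to which $L_i$ or $W_j$ contains them, and then invoking the mutual independence of the random choices across these parts. Since $\pi$ is good, the goodness hypothesis will do exactly the structural work needed to turn ``every edge of $S(\pi)$ is in $G$'' into a conjunction of $f(\pi)$ elementary selection events, each of probability $\ell/k$.

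First I would enumerate the parts contributing to $f(\pi)$. Let $I\subseteq\{1,\ldots,t\}$ consist of those $i$ for which some edge of $S(\pi)$ lies in some element of $L_i$, and let $J\subseteq\{1,\ldots,t'\}$ consist of those $j$ for which some edge of $S(\pi)$ lies in $W_j$; by the definition of $f$ we have $f(\pi)=|I|+|J|$. For each $i\in I$, the goodness of $\pi$ guarantees a unique $B_i\in L_i$ such that every edge of $S(\pi)$ lying inside some element of $L_i$ actually lies in $E(B_i)$. Hence the event ``all such edges are edges of $G$'' coincides with the single event ``$B_i$ is among the $\ell$ elements chosen from $L_i$'', which occurs with probability $\ell/k$. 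Similarly, for each $j\in J$, goodness forces $W_j$ to contain exactly one edge $e_j$ of $S(\pi)$, and ``$e_j\in E(G)$'' is the event that $e_j$ is among the $\ell$ edges chosen from $W_j$, again of probability $\ell/k$.

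Finally, the construction of $G$ makes the choices in the $t+t'$ parts $L_1,\ldots,L_t,W_1,\ldots,W_{t'}$ mutually independent, so the events indexed by $I\cup J$ above are mutually independent. Their intersection is precisely the event that every edge of $S(\pi)$ belongs to $E(G)$, i.e., that $S(\pi)$ is a Hamiltonian cycle of $G$, and multiplying the probabilities gives $(\ell/k)^{|I|+|J|}=p^{f(\pi)}$. I do not expect a genuine obstacle here; the entire content is bookkeeping plus an appeal to goodness to ensure the ``one block per relevant $L_i$, one edge per relevant $W_j$'' structure that cleanly aligns the Hamiltonicity event with the independent part-wise selections, exactly as in the proof of Lemma~\ref{l:prob}.
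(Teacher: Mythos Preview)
Your proof is correct and follows exactly the approach the paper intends: the paper itself gives no proof of this lemma beyond noting it is analogous to Lemma~\ref{l:prob}, with the relevant per-part probability being $\ell/k$ for both the $L_i$'s and the $W_j$'s. Your bookkeeping with the index sets $I$ and $J$, the use of goodness to single out a unique $B_i\in L_i$ (respectively a unique $e_j\in W_j$), and the appeal to independence across parts is precisely the argument the paper is leaving to the reader.
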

The following lemma is analogous to Lemma \ref{l:num-bad} but is somewhat more technical as we need to account for the $W_j$'s and as the elements of $L_i$ are not necessarily complete $r$-graphs.
\begin{lemma}\label{l:num-bad-general}
	The number of bad permutations is at most $n! \cdot \Theta(n^{-\delta r})$.
\end{lemma}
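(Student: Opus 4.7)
The plan is to mirror the union-bound strategy of Lemma \ref{l:num-bad}: bound the probability that a uniformly random $\pi \in S_n$ is bad, and then multiply by $n!$. By the definition of badness I am looking for two distinct positions $a \neq b$ on the Hamiltonian cycle such that the edges $e_a, e_b \in S(\pi)$ either lie in distinct elements of the same $L_i$, or lie in the same $W_j$. The first observation is that only pairs with $e_a \cap e_b = \emptyset$ can produce badness: within each $L_i$ the $k$ elements are vertex-disjoint by Lemma \ref{l:hs}, and within each $W_j$ the $k$ elements are disjoint by Lemma \ref{l:hs-w}, so two intersecting edges cannot witness either type of badness. For cyclic positions at distance at least $r$, the pair $(e_a, e_b)$ is distributed uniformly over ordered pairs of disjoint $r$-subsets of $[n]$, and there are $\Theta(n^2)$ such pairs of positions.

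For the $L$-contribution, sum over all ordered pairs $(B, B')$ of distinct elements of the same $L_i$, of which there are $|L|(k-1)$. Because $V(B)$ and $V(B')$ are vertex-disjoint, every $(S_1, S_2) \in E(B) \times E(B')$ already consists of disjoint $r$-sets, so $\Pr[e_a \in E(B),\, e_b \in E(B')] = z^2 / (\binom{n}{r}\binom{n-r}{r})$, where $z = |E(B)|$ is the common edge-count granted by Property (v) of Definition \ref{d:packing}. Using Property (iii) in the form $|L|z \le \tfrac12\binom{n}{r}$ and bounding $z \le \binom{q}{r} = \Theta(n^{\beta r})$, the total $L$-contribution for a single pair $(a,b)$ is
$$
\frac{|L|(k-1)z^2}{\binom{n}{r}\binom{n-r}{r}} \;\le\; \frac{(k-1)z}{2\binom{n-r}{r}} \;=\; O(n^{r(\beta-1)}).
$$
Multiplying by the $\Theta(n^2)$ admissible pairs $(a,b)$ gives $O(n^{2+r(\beta-1)})$. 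Substituting $\beta = \tfrac12 - \delta$ for $r \ge 4$ yields $O(n^{2-r/2-r\delta}) \subseteq O(n^{-r\delta})$ since $r \ge 4$ makes $r/2 \ge 2$, and substituting $\beta = \tfrac13-\delta$ for $r = 3$ yields $O(n^{-3\delta}) = O(n^{-r\delta})$. So by the very choice of $\beta$ in (\ref{e:beta}) the exponents land precisely at $-r\delta$.

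The $W$-contribution is easier: there are $|W|(k-1) = O(n^r)$ ordered pairs of distinct elements within a common $W_j$, and each such pair occurs as $(e_a, e_b)$ with probability $1/(\binom{n}{r}\binom{n-r}{r})$, so the per-pair probability is $O(n^{-r})$ and the total over $(a,b)$ is $O(n^{2-r})$. For any $r \ge 3$ and sufficiently small $\delta$ this is absorbed into the $L$-bound, giving altogether $\Pr[\pi \text{ bad}] = O(n^{-r\delta})$ and hence at most $n! \cdot \Theta(n^{-\delta r})$ bad permutations. The main technical subtlety compared with the Steiner-system proof of Lemma \ref{l:num-bad} is that elements of $L_i$ are no longer complete $r$-graphs, so one cannot evaluate probabilities as simple fractions like $(q+1)q(q-1)/((n-3)(n-4)(n-5))$; instead the argument must package the relevant information as ``$|L|z \le \tfrac12\binom{n}{r}$ with $z \le \binom{q}{r}$,'' which is exactly what Properties (iii) and (v) of the packing were set up to provide.
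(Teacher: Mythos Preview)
Your proof is correct and follows the same union-bound strategy as the paper: restrict to disjoint pairs $e_a,e_b$, bound the probability for each pair by $O(n^{r(\beta-1)})$, and sum over the $\Theta(n^2)$ pairs of positions. The final arithmetic with the choice of $\beta$ from (\ref{e:beta}) is identical.

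There is one point worth noting, however. Your computation of the $L$-contribution invokes Properties (iii) and (v) of Definition~\ref{d:packing} (packaging the bound as $|L|z \le \tfrac12\binom{n}{r}$ together with $z \le \binom{q}{r}$), and you describe this as ``exactly what Properties (iii) and (v) of the packing were set up to provide.'' The paper's own proof of this lemma actually needs neither property: it simply observes that for $f$ to lie in $E(B')$ one must have $f \subset V(B')$, and since $|V(B')|=q$ this already gives probability $O((q/n)^r)=O(n^{r(\beta-1)})$; conditioning on the (unique) $L_i$ containing $e$ then avoids any global count of $|L|$. So your route is valid but slightly more elaborate than necessary, and your remark about the purpose of Properties (iii) and (v) overstates their role here---they are used elsewhere in the paper, not in this lemma. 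Similarly, your caveat ``for sufficiently small $\delta$'' when absorbing the $W$-term is harmless but unnecessary: the standing constraint $\delta<\beta$ already forces $r\delta \le r-2$, so $n^{2-r}=O(n^{-r\delta})$ automatically.
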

\begin{proof}
	It is equivalent, although more convenient, to upper-bound the probability that
	a random permutation $\pi \in S_n$ is bad. So, let $\pi \in S_n$ be drawn at random and recall that its
	$r$-set $S(\pi)$ contains $n$ elements.
	Let $e,f \in S(\pi)$ be two distinct edges. First observe that if $e \cap f \neq \emptyset$ then
	$e$ and $f$ never appear in two different elements of the same $L_i$ since the $r$-subgraphs in $L_i$ are
	pairwise vertex-disjoint. Similarly, $e$ and $f$ never appear together in the same $W_j$ since all elements of $W_j$ are pairwise disjoint. So we can assume that $e \cap f = \emptyset$.
	
	Suppose that $e$ is an edge of some $B \in L_i$. What is the probability that $f$ is an edge of one of the other
	$k-1$ elements of $L_i$? Let $B'$ be such an element and recall that $B'$ and $B$ are vertex-disjoint. As $f$ is a completely random $r$-set of
	$[n] \setminus e$ (recall that $\pi$ is a random permutation), for the event $f \in E(B')$ to occur it must be that all $r$ vertices of $f$ are
	in the set of $q$ vertices of $B'$. The latter occurs with probability $O((q/n)^r)=O(n^{r(\beta-1)})$.
	As there are $k-1$ choices for $B'$, we obtain by the union bound that the probability that $e$ and $f$ are in two distinct elements of the same part $L_i$ is at most $O(kn^{r(\beta-1)})=O(n^{r(\beta-1)})$.
	
	Suppose next that $e$ is in some $W_j$. What is the probability that $f$ is one of the other $k-1$ elements of
	$W_j$? As $f$ is a completely random $r$-set of $[n] \setminus e$, the event $f \in W_j$ occurs with probability $\Theta(k n^{-r}) = \Theta(n^{-r}) \le O(n^{r(\beta-1)})$.

	As there are less than $n^2$ pairs of distinct elements of $S(\pi)$ it follows from the union bound that the probability that $\pi$ is bad is at most $\Theta(n^2n^{r(\beta-1)})=\Theta(n^{2-r+r\beta})$.
	But recall (\ref{e:beta}) that $\beta=\frac{1}{2}-\delta$ if $r \ge 4$ and $\beta=\frac{1}{3}-\delta$ if $r=3$
	hence the latter probability is at most $\Theta(n^{-\delta r})$ and the result follows.
\end{proof}

Let $\L$ denote the set of good permutations. Then by the last lemma, $|\L| \ge n! \cdot (1-\Theta(n^{-\delta r}))$. Observe also that by Lemma \ref{l:prob-general}, the random variable $H(G)$ which is the number of Hamiltonian cycles in $G$ satisfies
\begin{equation}\label{e:EHG-general}
\mathbb{E}[H(G)] \ge \frac{1}{2n}\sum_{\pi \in \L} p^{f(\pi)}\;.
\end{equation}
Although not crucial, a point to observe is that unlike (\ref{e:EHG}) where we have an equality, in (\ref{e:EHG-general}) we only have an inequality because a bad permutation has a small chance of inducing a Hamiltonian cycle, as here we choose $\ell$ elements of $L_i$ for edge-containment in $G$, as opposed to only one element of $L_i$ in the proof of Theorem \ref{t:special}.

Let $\overline{f}$ denote the average of $f(\pi)$ ranging over all $\pi \in \L$. Then we have, analogous to
Lemma \ref{l:amgm} using the inequality of arithmetic and geometric means:
\begin{lemma}\label{l:amgm-general}
	\begin{equation}\label{e:amgm-general}
	\mathbb{E}[H(G)] \ge \frac{|\L|}{2n}p^{\overline{f}}\;.
	\end{equation} \qed
\end{lemma}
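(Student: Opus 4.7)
The plan is to derive this inequality as a direct application of the arithmetic--geometric mean inequality to the bound already established in (\ref{e:EHG-general}), mirroring exactly the step used for Lemma \ref{l:amgm}. Starting from
$$\mathbb{E}[H(G)] \;\ge\; \frac{1}{2n}\sum_{\pi\in\L} p^{f(\pi)},$$
I would divide the inner sum by $|\L|$ to interpret it as an arithmetic mean over the good permutations and invoke AM--GM:
$$\frac{1}{|\L|}\sum_{\pi\in\L} p^{f(\pi)} \;\ge\; \left(\prod_{\pi\in\L} p^{f(\pi)}\right)^{1/|\L|} \;=\; p^{\,\tfrac{1}{|\L|}\sum_{\pi\in\L} f(\pi)} \;=\; p^{\overline{f}}.$$
The second equality uses that the geometric mean of $p^{f(\pi)}$ with common base $p$ is simply $p$ raised to the arithmetic mean of the exponents, and the third is just the definition of $\overline{f}$. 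Multiplying both sides by $|\L|/(2n)$ and chaining with (\ref{e:EHG-general}) yields the claim.

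There is no genuine obstacle here: the statement is a one-line consequence of AM--GM and requires no information about the structure of $G$, the packing $L$, or the quantity $f(\pi)$ beyond the expectation bound in (\ref{e:EHG-general}). The only minor point worth noting is that, unlike in Lemma \ref{l:amgm} where (\ref{e:EHG}) held with equality, here (\ref{e:EHG-general}) is already an inequality (bad permutations may still contribute, since now $\ell$ elements of each $L_i$ are selected rather than one), so the resulting lower bound on $\mathbb{E}[H(G)]$ is weaker in principle but loses nothing of consequence for the subsequent estimation of $\overline{f}$.
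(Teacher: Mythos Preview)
Your proposal is correct and matches the paper's own argument exactly: the paper simply states that the lemma is analogous to Lemma~\ref{l:amgm} and follows from (\ref{e:EHG-general}) by the arithmetic--geometric mean inequality. Your observation about (\ref{e:EHG-general}) being an inequality rather than an equality is also noted in the paper just before the lemma.
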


As in the previous section, in order to lower-bound (\ref{e:amgm-general}) we need to use the notions of
$g(\pi)$, $\overline{g}$ and $\overline{g^*}$. Their definitions remain the same:
$g(\pi)$ is the number of {\em consecutive} pairs of elements of $S(\pi)$ that fall into the same element of $L$ (however, recall that now some elements of $S(\pi)$ may not even be in any element of $L$ as they may be in $W$),
$\overline{g}$ is the average of $g(\pi)$ ranging over all $\pi \in \L$ (good permutations) and $\overline{g^*}$ is the average of $g(\pi)$ over all $n!$ permutations. Recall also that $\overline{f} \le n - \overline{g}$.
Analogous to Lemma \ref{l:g^*} we have the following (somewhat more involved) lemma:
\begin{lemma}\label{l:g^*-general}
	$$
	\overline{g^*} = \Theta(n^{\beta-2\delta})\;.
	$$
\end{lemma}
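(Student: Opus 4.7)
The plan is to compute $\overline{g^*}$ directly by linearity of expectation. Write $g(\pi) = \sum_{i=1}^{n} Y_i(\pi)$, where $Y_i(\pi)$ is the indicator of the event that the $i$-th and $(i{+}1)$-st edges of $S(\pi)$ (cyclically) lie in a common $B \in L$. By the cyclic symmetry of $S_n$, all $Y_i$ have the same distribution, so $\overline{g^*} = n \cdot \Pr_{\pi \sim S_n}[Y_1(\pi) = 1]$. Since the elements of $L$ are pairwise edge-disjoint, the events $\{e_1(\pi), e_2(\pi) \in E(B)\}$ are disjoint over $B \in L$, and hence
\[
\Pr[Y_1(\pi) = 1] = \sum_{B \in L} \Pr[e_1(\pi), e_2(\pi) \in E(B)].
\]

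For a single block $B$, I would compute $\Pr[e_1(\pi), e_2(\pi) \in E(B)]$ by enumerating the ordered first $r{+}1$ positions of $\pi$. Setting $X = \{\pi(2),\ldots,\pi(r)\}$, the two consecutive edges share the set $X$, and the events force $\pi(1)$ and $\pi(r{+}1)$ to each extend $X$ to an edge of $B$, contributing $d_B(X)$ and $d_B(X) - 1$ choices respectively (automatically ensuring $\pi(1) \neq \pi(r{+}1)$). Accounting for the $(r-1)!$ orderings of $X$ and the $(n-r-1)!$ completions of the remaining positions gives
\[
\Pr[e_1(\pi), e_2(\pi) \in E(B)] = \frac{(r-1)!\,(n-r-1)!}{n!} \sum_{X \subseteq V(B),\,|X|=r-1} d_B(X)(d_B(X) - 1).
\]

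I would then bound the inner sum using Property (ii) of Definition \ref{d:packing}, which gives $d_B(X) \geq n^{-\delta}(q - r + 1) = \Theta(n^{\beta - \delta})$ for each $|X| = r - 1$, so that $\sum_X d_B(X)(d_B(X) - 1) = \Omega(\binom{q}{r-1} n^{2(\beta-\delta)}) = \Omega(n^{(r+1)\beta - 2\delta})$. Summing over the $|L| = \Theta(n^{r - r\beta})$ blocks and using $n!/(n-r-1)! = \Theta(n^{r+1})$ gives
\[
\Pr[Y_1(\pi) = 1] = \Omega\!\left(\frac{|L|\, n^{(r+1)\beta - 2\delta}}{n^{r+1}}\right) = \Omega(n^{\beta - 2\delta - 1}),
\]
and multiplying by $n$ yields the lower bound $\overline{g^*} = \Omega(n^{\beta - 2\delta})$.

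The main obstacle is the matching upper bound, which requires controlling $d_B(X)$ from above as well; this does not follow from Definition \ref{d:packing} alone but can be extracted from the explicit construction of Lemma \ref{l:exists}. In that construction the edges of each $B = B_j^{**}$ are a nearly uniform random subfamily of $E(K_q^r)$ (each edge kept with probability $\Theta(n^{-\delta})$), and a Chernoff argument dual to Claim 1 there, applied to the upper tail, yields $d_B(X) = O(n^{\beta - \delta})$ uniformly in $B$ and $X$. This delivers the matching upper bound $\sum_X d_B(X)(d_B(X) - 1) = O(n^{(r+1)\beta - 2\delta})$, and repeating the arithmetic above with $O(\cdot)$ in place of $\Omega(\cdot)$ completes the proof of $\overline{g^*} = \Theta(n^{\beta - 2\delta})$. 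I note that for the ultimate application (the chain $\overline{f} \le n - \overline{g}$, feeding into the bound on $\mathbb{E}[H(G)]$), only the lower bound is essential, so this subtlety is inessential to the overall argument.
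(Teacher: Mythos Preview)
Your argument is correct and follows essentially the same strategy as the paper: use linearity and cyclic symmetry to reduce to computing the probability that two consecutive edges of $S(\pi)$ lie in a common $B\in L$, then invoke Property (ii) of Definition \ref{d:packing} for the lower bound. The only cosmetic difference is that the paper evaluates this probability via the factorisation
\[
\Pr\bigl[(a_2,\dots,a_{r+1})\in E(B)\;\big|\;(a_1,\dots,a_r)\in E(B)\bigr]\cdot \Pr\bigl[(a_1,\dots,a_r)\in E(B)\text{ for some }B\in L\bigr],
\]
bounding each factor below by $\Theta(n^{\beta-1-\delta})$ and $\Theta(n^{-\delta})$ respectively, whereas you write out the explicit enumeration formula in terms of $\sum_{X} d_B(X)(d_B(X)-1)$; these are two ways of performing the same count.

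Your remark about the upper bound is well taken and in fact applies equally to the paper's own proof: the paper also derives only the inequality $p^*\ge \Theta(n^{\beta-1-2\delta})$ from Property (ii) and then writes $\Theta$ without further comment. As you correctly observe, the matching upper bound on $d_B(X)$ (and on $|L|$) is not part of Definition \ref{d:packing} but is an artefact of the specific construction in Lemma \ref{l:exists}, and your proposed upper-tail Chernoff argument there is valid. And as you also note, only the lower bound $\overline{g^*}=\Omega(n^{\beta-2\delta})$ is ever used downstream, so the distinction is immaterial to the theorem.
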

\begin{proof}
	Again, it is convenient to consider a permutation $\pi$ chosen at random from $S_n$.
	Consider two consecutive elements of $S(\pi)$. Denote them by $(a_1,\ldots,a_r)$ and $(a_2,\ldots,a_{r+1})$.
	We compute the probability $p^*$ that they are both in the same element of $L$.
	Clearly
	$$
	p^* = \Pr[(a_2,\ldots,a_{r+1}) \in E(B) \;|\; (a_1,\ldots,a_r) \in E(B)] \cdot 
	\Pr[(a_1,\ldots,a_r) \in E(B) {\rm ~where~} B \in L] \;.
	$$
	By Property (ii) of Definition \ref{d:packing}, the minimum degree of each element of $L$ is at
	least $n^{-\delta}\binom{q-1}{r-1}=\Theta(n^{\beta(r-1)-\delta})$. Hence, the number of edges of each element of $L$ is at least $\Theta(n^{\beta r-\delta})$. As $|L| \ge n^{r-r\beta}$, the number of edges of $K_n^r$ appearing in some element of $L$ is at least $\Theta(n^{r-\delta})$.
	As $K_n^r$ has $\Theta(n^r)$ edges, this implies that
	$\Pr[(a_1,\ldots,a_r) \in E(B) {\rm ~where~} B \in L] \ge \Theta(n^{-\delta})$.
	So it remains to estimate $\Pr[(a_2,\ldots,a_{r+1}) \in E(B) \;|\; (a_1,\ldots,a_r) \in E(B)]$.
	Suppose that we are given that $(a_1,\ldots,a_r) \in E(B)$.
	In particular $a_1,\ldots,a_r$ are all vertices of $B$. Now, consider the $(r-1)$-set $\{a_2,\ldots,a_r\}$.
	By Property (ii) of Definition \ref{d:packing}, there are at least $n^{-\delta}(q-r+1)=\Theta(n^{\beta-\delta})$
	edges of $B$ that contain $\{a_2,\ldots,a_r\}$. Hence, $\Pr[(a_2,\ldots,a_{r+1}) \in E(B) \;|\; (a_1,\ldots,a_r) \in E(B)]$ is at least the probability that $a_{r+1}$ is a vertex of one of these $\Theta(n^{\beta-\delta})$ edges.
	This occurs with probability $\Theta(n^{\beta-\delta})/(n-r) = \Theta(n^{\beta-1-\delta})$.
	Hence we have proved that $p^* = \Theta(n^{\beta-1-2\delta})$.
	
	As there are $n$ distinct pairs of consecutive elements of $S(\pi)$, the expected value of
	$g(\pi)$ for a randomly chosen permutation is $n \cdot \Theta(n^{\beta-1-2\delta})=\Theta(n^{\beta-2\delta})$, as claimed.
\end{proof}

\begin{lemma}\label{l:g-general}
	$\overline{g} \ge \overline{g^*} - \Theta(n^{\beta+\delta/2-\delta r}) $.
\end{lemma}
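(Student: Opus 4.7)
The plan is to mimic the strategy of Lemma \ref{l:g}, adapted to the abstract packing $L$ and to arbitrary $r \ge 3$. Write $C(\pi)$ for the collection of $n$ cyclic $(r+1)$-tuples $(a_1,\ldots,a_{r+1})$ arising from consecutive edges of $S(\pi)$, and call $S \subseteq C(\pi)$ \emph{independent} if its tuples are pairwise vertex-disjoint, and $|S|$-\emph{bad} if it is independent and, for every tuple in $S$, its two consecutive $r$-edges lie in a common element of $L$. I will choose the threshold $s = \lceil n^{\beta+\delta/2} \rceil$, show that very few permutations admit any $s$-bad set, and then show that any $\pi \notin \L$ that does not admit one must satisfy $g(\pi) = O(rs)$.

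First I would upper-bound the probability that a uniformly random $\pi \in S_n$ admits an $s$-bad set. For a single $(r+1)$-tuple $T$, Property (iii) of Definition \ref{d:packing} caps at $1/2$ the probability that the first $r$-edge of $T$ lies in some $B \in L$, while Property (ii) caps at $(q-r+1)/(n-r) = O(n^{\beta-1})$ the conditional probability that the last $r$-edge lies in the \emph{same} $B$ (since the $(r-1)$-shadow $\{a_2,\ldots,a_r\}$ has degree at most $q-r+1$ in $B$). Hence the per-tuple probability is $p^* = O(n^{\beta-1})$. For $s$ disjoint tuples I iterate this conditioning: after fixing the vertices of the previous tuples the next $r+1$ vertices are a uniform ordered string from $[n]$ minus a set of size at most $(r+1)(s-1) = o(n)$, so each step degrades by only a $(1+O(s/n))$-factor, and since $s^2/n \to 0$ in the ranges of $\beta$ prescribed by (\ref{e:beta}) the overall factor is $1+o(1)$. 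A union bound over the $\binom{n}{s}$ position patterns gives
\[
\Pr[\pi \text{ admits an } s\text{-bad set}] \;\le\; \binom{n}{s}(2p^*)^s \;\le\; \left(\frac{Cn^{\beta}}{s}\right)^{s} \;=\; (Cn^{-\delta/2})^{s} \;\ll\; n^{-2},
\]
so at most $(n-2)!$ permutations admit any $s$-bad set.

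Next, I will call $\pi$ \emph{very bad} if $g(\pi) > 3rs$. Each $(r+1)$-tuple in $C(\pi)$ shares a vertex with at most $2r$ others, so on the ``same-block'' tuples of a very-bad $\pi$ the intersection graph has maximum degree at most $2r$ and therefore contains an independent set of size greater than $s$. Thus every very-bad $\pi$ is $s$-bad, and in particular there are at most $(n-2)!$ very-bad permutations. For any $\pi \notin \L$ that is not very bad, $g(\pi) \le 3rs$, and combining with Lemma \ref{l:num-bad-general} gives
\[
\sum_{\pi \notin \L} g(\pi) \;\le\; n!\cdot\Theta(n^{-\delta r})\cdot 3rs \;+\; (n-2)!\cdot n \;=\; n! \cdot \Theta(n^{\beta+\delta/2-\delta r}).
\]
Dividing by $|\L| \ge n!/2$ and substituting into $\overline{g} = (n!\,\overline{g^*} - \sum_{\pi\notin\L}g(\pi))/|\L|$, exactly as at the end of Lemma \ref{l:g}, yields $\overline{g} \ge \overline{g^*} - \Theta(n^{\beta+\delta/2-\delta r})$.

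The main obstacle is the conditional-probability estimate: unlike the Steiner-system setting of Section 3, where blocks are complete $K_{q+1}^3$ and the relevant counts are exact rational numbers, here the elements of $L$ satisfy only the minimum-degree hypothesis (Property (ii)) together with the global sparsity hypothesis (Property (iii)), and one must combine them to extract the upper bound $p^* = O(n^{\beta-1})$ and then justify that iteratively conditioning on previously fixed disjoint tuples costs only a $(1+o(1))$-factor per step. Everything else is a careful bookkeeping of the two error sources ``$(n-2)!$ very-bad permutations'' and ``$n!\,\Theta(n^{-\delta r})$ mildly-bad permutations'', weighted by their respective bounds on $g(\pi)$.
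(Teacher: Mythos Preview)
Your proposal is essentially the same argument as the paper's: choose the threshold $\alpha=\beta+\delta/2$, bound the probability that a random $\pi$ admits an independent $s$-bad family by $\binom{n}{s}(O(n^{\beta-1}))^s \ll n^{-2}$, use the bounded intersection degree of the $(r+1)$-tuples in $C(\pi)$ to show that very-bad permutations are $s$-bad, and then combine the two error terms exactly as at the end of Lemma~\ref{l:g}. One small correction: you invoke Property~(ii) of Definition~\ref{d:packing} to cap the conditional probability that $a_{r+1}$ extends the edge within the same $B$, but Property~(ii) is a \emph{lower} bound on degrees; the upper bound $q-r+1$ you need is just the trivial one coming from $|V(B)|=q$, and indeed the paper uses only that (neither Property~(ii) nor Property~(iii) is needed for this direction of the estimate).
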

\begin{proof}
Consider some $\pi \in S_n$ chosen at random from $S_n$. Let
$$
C(\pi)=\{e \cup f \;|\; e,f \in S(\pi) \;,\; |e \cup f|=r+1\}\;.
$$
Namely, $C(\pi)$ is the set of all consecutive $(r+1)$-sets of $\pi$ and can alternatively be viewed as the set of pairs of consecutive elements of $S(\pi)$. In particular, $|C(\pi)|=n$.
A subset $S \subset C(\pi)$ is {\em independent} if any two elements of $S$
are disjoint (so the union of two elements of $S$ consists of $2(r+1)$ vertices).
For a subset $S \subset C(\pi)$ we say that it is $|S|$-bad
if it is independent and for each $(r+1)$-set in $S$, the two consecutive edges $e,f \in S(\pi)$ forming it appear in the same element of $L$.
Our goal is to prove that if $|S|$ is large, the probability of being $|S|$-bad is small.

Consider some $S \subset C(\pi)$ with $s=|S|$ and suppose that
$S=\{(a_{i,1},a_{i,2},\ldots,a_{i,r+1}) ~|~ i=1,\ldots,s\}$.
What is the probability that $S$ is $s$-bad? (we may assume that $S$ is
independent otherwise it is not bad by definition).
Consider first the $(r+1)$-tuple $(a_{1,1},a_{1,2},\ldots,a_{1,r+1})$.
The probability that both of the edges $(a_{1,1},a_{1,2},\ldots,a_{1,r})$ and $(a_{1,2},a_{1,3},\ldots,a_{1,r+1})$ appear in the same element of $L$ is at most the probability that
all the $r+1$ vertices of the $(r+1)$-tuple are vertices of the same $B \in L$.
The latter is at most the probability that $a_{1,r+1}$ appears in some $B \in L$ given that
$\{a_{1,1},a_{1,2},\ldots,a_{1,r}\}$ are vertices of $B$, hence the probability is at most $(q-r)/(n-r)$.
Given that all the $r+1$ vertices of the $(r+1)$-tuple $(a_{i,1},a_{i,2},\ldots,a_{i,r+1})$ fall in the same element of $L$, for $i=1,\ldots,z-1$ (distinct $(r+1)$-tuples may or may not fall in the same element of $L$), what is the probability
that also all vertices of $(a_{z,1},a_{z,2},\ldots,a_{z,r+1})$ fall in the same element of $L$?
Suppose we are given the information to which element of $L$ each of the $(r+1)z-1$ vertices
$a_{1,1},a_{1,2},\ldots,a_{z-1,r+1},a_{z,1},\ldots,a_{z,r}$ 
belongs. The probability that $a_{z,r+1}$ also belongs to the element to which $a_{z,1},\ldots,a_{z,r}$ belong is thus at most $(q-r)/(n-(r+1)z+1)$.
Hence, the probability that $S$ is $s$-bad is at most
$$
\left(\frac{q-r}{n-(r+1)s+1}\right)^s\;.
$$
Let us say that $\pi$ is {\em $s$-bad} if it contains some independent $S \subset C(\pi)$
that is $s=|S|$ bad. Now, let $\alpha = \beta+\delta/2$ (so by the definition of $\beta$ we have that
$\alpha=\frac{1}{2}-\delta/2$ if $r \ge 4$ and $\alpha=\frac{1}{3}-\delta/2$ if $r=3$).
Thus, the probability that a randomly chosen $\pi$ is $\lfloor n^\alpha \rfloor$-bad is at most
$$
\binom{n}{\lfloor n^\alpha \rfloor} \left(\frac{q-r}{n-(r+1)\lfloor n^\alpha \rfloor+1}\right)^{\lfloor
	n^\alpha \rfloor}  \ll \frac{1}{n^2}
$$
where we have only used here that $n$ is sufficiently large, that $q = \Theta(n^\beta)$,
and that $\alpha > \beta$.
In other words, we have proved that the number of $\lfloor n^\alpha \rfloor$-bad permutations
is at most $n!/n^2 \le (n-2)!$.

We will say that $\pi$ is {\em very bad} if there is a set of at least $2rn^{\alpha}$ (not necessarily disjoint) elements of $C(\pi)$ such that for each $(r+1)$-tuple in this set, all its vertices appear in the same element of $L$. Observe that since each $(r+1)$-tuple in $C(\pi)$ intersects only $2r$ other $(r+1)$-tuples (recall that the $(r+1)$-tuples are consecutive elements of $\pi$), a very bad $\pi$ is also
$\lfloor n^\alpha \rfloor$-bad, and in particular there are at most $(n-2)!$ very bad permutations.

Let us now bound $\sum_{\pi \notin{\cal L}} g(\pi)$.
By Lemma \ref{l:num-bad-general}, we know that the number of $\pi \notin{\cal L}$ is at most
$n! \cdot \Theta(n^{-\delta r})$.
We have just shown that at most $(n-2)!$ of them are very bad (in fact, at most $(n-2)!$ out of all permutations, not just bad ones)
and for them we will use the trivial bound $g(\pi) \le n$.
The others are not very bad, so for them we have $g(\pi) \le 2rn^{\alpha}$.
Thus,
$$
\sum_{\pi \notin{\cal L}} g(\pi) \le n! \cdot \Theta(n^{-\delta r})2r n^{\alpha}+(n-2)!n\;.
$$
Hence
$$
\frac{\sum_{\pi \notin{\cal L}} g(\pi)}{n!} \le \Theta(n^{\alpha-\delta r})\;.
$$
Now,
\begin{align*}
\overline{g} & =  \frac{n! \overline{g^*} - \sum_{\pi \notin{\cal L}} g(\pi)}{|{\cal L}|}  \\
& \ge \overline{g^*} - \frac{\sum_{\pi \notin{\cal L}} g(\pi)}{|{\cal L}|} \\
& \ge  \overline{g^*} - \frac{2\sum_{\pi \notin{\cal L}} g(\pi)}{n!} \\
& \ge   \overline{g^*} - \Theta(n^{\alpha-\delta r})\;.
\end{align*}
\end{proof}

Using (\ref{e:amgm-general}), Lemma \ref{l:g^*-general} and Lemma \ref{l:g-general},
$$
\mathbb{E}[H(G)] \ge \frac{|\L|}{2n}p^{\overline{f}} \ge \frac{|\L|}{2n}p^{n-\overline{g}}
\ge \frac{|\L|}{2n}p^{n-\overline{g^*}+\Theta(n^{\beta+\delta/2-\delta r})} = \frac{|\L|}{2n}p^{n-\Theta(n^{\beta-2\delta})}\;.
$$
Since $|\L| \ge n!/2$, since $E(n,p)=p^n(n-1)!/2$ and since
$H_r(n,p,\epsilon) \ge \mathbb{E}[H(G)]$ we obtain from the last inequality that
$$
\frac{H_r(n,p,\epsilon)}{E(n,p)} \ge (1/p)^{\Theta(n^{\beta-2\delta})} \ge 2^{\Omega(n^{\gamma})}
$$
where $\gamma=\beta-2\delta$. So, indeed $\gamma=\frac{1}{2}-3\delta$ if $r \ge 4$ and $\gamma=\frac{1}{3}-3\delta$ if $r=3$. This proves Theorem \ref{t:hypergraphs-reform}. \qed
	
\section{\texorpdfstring{$r$}{r}-graphs and \texorpdfstring{$H_r(n,p)$}{Hr(n,p)}}

In this section we prove Theorem \ref{t:hypergraphs-2}.
We present an explicit construction suitable for certain densities and then use Lemma \ref{l:q-p} to cover all densities.

Let $k \ge r \ge 3$ be fixed.
The balanced complete $k$-partite $r$-graph $T_r(n,k)$ is defined as follows. It has $n$ vertices
partitioned into $k$ parts, each of size either
$\lceil n/k \rceil$ or $\lfloor n/k \rfloor$ and its edge set consists of all partial transversals of order $r$, namely, all
$r$-subsets of vertices that intersect each part at most once.

If we assume that $n$ is a multiple of $k$ then the number of edges of $T_r(n,k)$ is $\binom{k}{r}(n/k)^r$. Without this assumption, its density is
\begin{equation}\label{e:d-nkr}
\frac{k!}{(k-r)!k^r}(1 \pm \Theta(\textstyle \frac{1}{n}))\;.
\end{equation}
Notice that for every fixed $p$ and $r$, for a sufficiently large $k$ it holds that the density is larger than $p$. In what follows we will assume that $k > r$ (if $k=r$ and $n$ is not a multiple of $r$ then
$T_r(n,k)$ is not Hamiltonian and we would like to avoid dealing with this special case).
\begin{lemma}\label{l:nkr}
$$
H(T_r(n,k)) \ge \frac{1}{2}\left(\frac{k-r+1}{k}\right)^n (n-1)! (k-r+1)^{-\Theta(\sqrt{n})}\;.
$$
\end{lemma}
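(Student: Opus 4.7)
The plan is to identify each Hamiltonian cycle of $T_r(n,k)$ with its cyclic sequence of part indices (its ``color pattern''), count balanced color patterns in which no $r$ cyclically consecutive entries repeat, and then promote each pattern to actual Hamiltonian cycles by permuting vertices within each part. For simplicity assume $k\mid n$ and set $m=n/k$; the case $k\nmid n$ (parts of sizes $\lceil n/k\rceil$ and $\lfloor n/k\rfloor$) follows by essentially identical arithmetic, with per-step probabilities shifted by $1\pm O(1/n)$ factors that are absorbed harmlessly. A Hamiltonian cycle of $T_r(n,k)$ is a cyclic vertex ordering in which every $r$ consecutive vertices lie in distinct parts, and reading off parts gives a cyclic pattern $(c_1,\ldots,c_n)\in[k]^n$ with each color appearing exactly $m$ times and every cyclic $r$-window consisting of $r$ distinct entries. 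Each such balanced cyclically valid pattern is realized by exactly $(m!)^k$ vertex labelings (permutations within parts), and each Hamiltonian cycle corresponds to $2n$ directed linear readings, so $H(T_r(n,k))=V_n(m!)^k/(2n)$ where $V_n$ is the number of balanced valid cyclic color patterns. Since $(m!)^k\cdot n!/(m!)^k=n!$ and $n!/(2n)=(n-1)!/2$, the lemma reduces to showing that the fraction of balanced color sequences that are cyclically valid is at least $((k-r+1)/k)^n(k-r+1)^{-\Theta(\sqrt{n})}$.

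To lower-bound that fraction I would draw a uniformly random balanced color sequence, fill it in position by position, and apply the chain rule. At step $i\ge r$, conditional on $(c_1,\ldots,c_{i-1})$ being a valid prefix, the probability that $c_i$ avoids the $r-1$ forbidden entries $c_{i-r+1},\ldots,c_{i-1}$ equals $\sum_{c\notin F_i}R_c^{(i)}/(n-i+1)$, where $R_c^{(i)}=m-\#\{j<i:c_j=c\}$. If the remaining color counts were perfectly balanced this would equal exactly $(k-r+1)/k$, and the first $r-1$ steps contribute the constant prefactor $k(k-1)\cdots(k-r+2)/k^{r-1}$. Standard hypergeometric concentration (preserved by the color-symmetric validity conditioning) gives $|R_c^{(i)}-(n-i+1)/k|=O(\sqrt{\min(i,n-i)})$ with high probability, which translates to a multiplicative correction of $1+O(\sqrt{\min(i,n-i)}/(n-i+1))$ on the conditional probability at step $i$. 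Summing log-corrections,
$$
\sum_{i=1}^{n-1} O\!\left(\frac{\sqrt{\min(i,n-i)}}{n-i+1}\right)=O(\sqrt{n}),
$$
so the product of conditional probabilities is at least $((k-r+1)/k)^n\exp(-O(\sqrt{n}))=((k-r+1)/k)^n(k-r+1)^{-\Theta(\sqrt{n})}$, using that $k-r+1\ge 2$ is a constant.

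The hard part will be the endgame, where for $i$ close to $n$ the denominator $n-i+1$ is small and the per-step correction ceases to be a negligible perturbation. I would handle this by truncating at $i^*=n-c\sqrt{n}$ for a suitable constant $c$: for $i\le i^*$ the analysis above is clean, and for $i>i^*$ I would condition on the high-probability event that at step $i^*$ every color count is within $O(\sqrt{n})$ of its ideal value, in which case each of the final $c\sqrt{n}$ conditional probabilities is bounded below by an absolute constant depending only on $k$ and $r$, contributing only a further $(k-r+1)^{-O(\sqrt{n})}$ factor that is absorbed into the target. The $r-1$ cyclic wraparound windows are handled analogously: conditional on linear validity and approximate balance at both ends, the first and last $r-1$ positions are close to independent under the validity measure, so the wraparound constraints hold with some constant probability depending only on $k$ and $r$. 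Combining these pieces yields $V_n/(n!/(m!)^k)\ge((k-r+1)/k)^n(k-r+1)^{-\Theta(\sqrt{n})}$ and hence the lemma.
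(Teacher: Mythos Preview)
Your reduction to counting balanced cyclically-valid color words and then multiplying by $\prod_\ell |A_\ell|!$ is exactly the paper's setup, but your method for counting those words has two real gaps. First, the assertion that hypergeometric concentration of the remaining counts $R_c^{(i)}$ is ``preserved by the color-symmetric validity conditioning'' is not a standard fact: color symmetry of the conditioning gives equal marginals, not concentration, and moreover the forbidden set $F_i=\{c_{i-r+1},\ldots,c_{i-1}\}$ is itself a function of the conditioned prefix, so you are not bounding the remaining count of a fixed set of colors. Second, your endgame claim is false as stated. It is not true that each of the last $c\sqrt n$ conditional probabilities is bounded below by an absolute constant: for instance with $k=4$, $r=3$, at step $n-1$ the two colors remaining in the multiset can both lie in the two-element forbidden set, giving conditional probability zero, and approximate balance at step $i^*$ does not preclude this. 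What you actually need is that, given a valid balanced prefix at $i^*$, the remaining $\Theta(\sqrt n)$ positions are \emph{simultaneously} valid (and the wraparound windows hold) with probability at least $(k-r+1)^{-O(\sqrt n)}$; proving that is essentially the extension argument you are trying to avoid. The ``close to independent'' treatment of the wraparound has the same defect.

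The paper sidesteps both issues by reversing the order of the two constraints. It counts \emph{admissible} words (linearly valid, no balance requirement) of length $t=n-\Theta(\sqrt n)$, for which the count is exactly $k(k-1)\cdots(k-r+2)(k-r+1)^{t-r+1}$; then Azuma on the Doob letter-count martingale over the uniform-admissible measure shows that at least half of these have every letter within $k\sqrt t$ of $t/k$ (no conditioning on balance is involved); finally each such ``feasible'' word is \emph{deterministically} extended to a good word of length $n$ by a short greedy procedure: repeatedly append a permutation of $[k]\setminus\{\ell\}$ for a currently most frequent letter $\ell$ until all counts agree, adjust for $n\not\equiv 0\pmod k$, pad with full permutations of $[k]$, and finish with $r$ carefully chosen permutations to make the cyclic boundary valid. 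Because the extension is explicit, there is no probabilistic endgame and no need to control a conditioned measure.
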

\begin{proof}
A permutation $\pi$ of the $n$ vertices of $T_r(n,k)$ is called {\em good} if
every $r$ consecutive vertices are from distinct parts and this holds cyclically.
Hence, for example, if $n=8$, $r=3$ and $k=4$
then a permutation of the form $ABCADBCD$ is good while $ABCDBCAD$ is not (here letters represent vertices from a part named by that letter).
By the definition of $T_r(n,k)$, every good permutation corresponds to a Hamiltonian cycle
and recall each Hamiltonian cycle gives rise to $2n$ good permutations. Thus,
$H(T_r(n,k)) \ge P(n,k,r)/(2n)$ where $P(n,k,r)$ is the number of good permutations.
It is difficult to obtain an exact closed form for $P(n,k,r)$ but it is possible to obtain
a (rather tight) lower bound for it.

A word of length $t$ over $[k]$ is called {\em admissible} if every $r$ consecutive letters are distinct
(here we do not require that this holds cyclically). Thus, for example if $r=3$, $k=4$ and $t=6$, the word $341324$ is admissible. The overall number of admissible words of length $t$ over $[k]$ is 
$k(k-1)\cdots(k-r+2)(k-r+1)^{t-r+1}$. In our setting we would like to count admissible words of length $t$
where $t$ is only slightly less than $n$ and we would also like each letter of $[k]$ to appear roughly its expected number of $t/k$ times. We can quantify this requirement as follows.

Consider the symmetric probability space of all admissible words of length $t$ over $[k]$.
For $\ell \in [k]$, let $X_\ell$ denote the random variable counting the number of occurrences of $\ell$
in  the chosen word. By symmetry, $E[X_\ell]=t/k$.
For $i=0,\ldots,t$, let $X_{\ell,i}$ denote the random variable which equals the {\em expected} number of occurrences of $\ell$ {\em given} the first $i$ letters of the chosen word.
Trivially, $X_{\ell,0}=E[X_\ell]=t/k$ and $X_{\ell,t}=X_\ell$ since at stage $t$ the whole word is known.
Notice that the sequence $X_{\ell,0},\ldots,X_{\ell,t}$ is a Doob martingale (see \cite{AS-2004}) and furthermore $|X_{\ell,i+1}-X_{\ell,i}| \le 1$ for $i=0,\ldots,t-1$ since at stage $i+1$ a single letter (which may or may not be equal to $\ell$) is exposed. Hence, by Azuma's inequality,
$$
\Pr[ |X_\ell - t/k| > k\sqrt{t} ] < 2e^{-k^2/2} < \frac{1}{2k}\;.
$$
Taking the union bound for all letters $\ell \in [k]$ we obtain that at least half of the admissible
words of length $t$ have the property that each letter appears at least $t/k-k\sqrt{t}$ times and
at most $t/k+k\sqrt{t}$ times. Call such admissible words {\em feasible}. Thus, there are at least
$\frac{1}{2}k(k-1)\cdots(k-r+2)(k-r+1)^{t-r+1}$ feasible words of length $t$.

Recall that $T_r(n,k)$ has $k$ parts, so denote them by $A_1,\ldots,A_k$
where $|A_i|$ is either $\lfloor n/k \rfloor$ or $\lceil n/k \rceil$.
A word of length $n$ over $[k]$ is called {\em good} if (i) every $r$ consecutive letters are
distinct and this holds cyclically and (ii) for each $\ell \in [k]$, letter $\ell$ occurs
precisely $|A_\ell|$ times. Observe that each good word corresponds to precisely
$\Pi_{\ell=1}^k |A_\ell|!$ good permutations.

Let $t=n-2k(k-1)^2\lceil \sqrt{n} \rceil -(k-1)^2-kr$ (so $t$ is very long).
We will prove that each feasible word of length $t$ is a {\em prefix} of some good word
of length $n$. As we have a lower bound for the number of feasible words, this will lower-bound the number of
good words which, in turn, lower bounds $P(n,k,r)$.

So, consider some feasible word $w$ of length $t$. Let $d$ be the difference between the letter that appears
least in $w$ and the letter than appears most in $w$. By the definition of feasible words,
$d \le 2k\sqrt{t} \le 2k\sqrt{n}$. Repeat the following process. If $d=0$ the process is done.
Otherwise, let $\ell$ be a most common letter ($\ell$ may not be unique, there could be up to $k-1$ candidates for $\ell$). Append to $w$ a permutation of the $k-1$ letters $[k] \setminus \ell$ such that the
property that every $r$ consecutive letters are distinct remains. Note that this is possible since $k > r$.
So, for example if $k=4$, $r=3$, and $3$ is most common in $w$ then if $w$ ends with, say $432$ then we can append $w$ with $412$ (or with $142$). After this step, $d$ did not increase. It may have decreased by $1$
or otherwise the number of letters that are most common decreased by $1$. So, after repeating this step at most $k-1$ times, $d$ must decrease. So after repeating these steps at most $(k-1)2k\sqrt{n}$ times we obtain
a word $w'$ with $d=0$ and its length is at most $t+(k-1)^2 2k\sqrt{n}$. Observe that by the choice of
$t$ we have that $w'$ is of length at most $n-(k-1)^2-kr$ and each letter occurs precisely $|w'|/k$ times
in $w'$.

Next we take care of slight imbalances due to the fact that $n$ is not necessarily a multiple of $k$.
Assume therefore that $n$ is not a multiple of $k$ and let $1 \le q \le k-1$ be the number of parts with
$\lfloor n/k \rfloor$ vertices. Without loss of generality, assume these are $A_1,\ldots,A_q$.
Repeat the following for all $\ell=1,\ldots,q$. Append to $w'$ a permutation of the $k-1$ letters $[k] \setminus \ell$ such that the property that every $r$ consecutive letters are distinct remains. 
Again note that this is possible since $k > r$. After this process ends we obtain a word $w^*$
of length at most $n-(k-1)^2-kr+(k-1)q \le n-kr$ such that either $d=0$ (if $n$ is a multiple of $k$) or else $d=1$ and the $q$ letters in $[q]$ have one less occurrence than the $q$ letters in
$[k] \setminus [q]$. In particular, $n-|w^*| \ge kr$ and is a multiple of $k$.
Arbitrarily append permutations of $[k]$ to $w^*$ until its length is precisely $n-kr$
while keeping the property that every $r$ consecutive letters are distinct, so we can
assume that $|w^*|=n-kr$. Finally we now have to complete $w^*$ to a good word by adding an amount
of $r$ 
permutations of $[k]$ such that the property of every $r$ consecutive letters holds cyclically.
Assume the first $r$ letters of $w^*$ are $\ell_1,\ldots,\ell_r$ in this order and recall that they are distinct.
Append to $w^*$  a permutation of $[k]$ so that $\ell_1$ is last in the added permutation and which keeps the property that every $r$ consecutive letters are distinct. Next append a permutation of $[k]$ so that the
suffix of the added permutation is $\ell_1\ell_2$ while keeping the property. Continue this process so that at stage $i$ append a permutation of $[k]$ so that its suffix is $\ell_1\cdots\ell_{i-1}\ell_i$. After $r$ stages, the obtained word is of length $n$
and its suffix (and its prefix) is $\ell_1\cdots\ell_{r-1}\ell_r$ so the requirement holds cyclically and so the word is good.

As each feasible word has been extended to a good word, the number of good words is at least $\frac{1}{2}k(k-1)\cdots(k-r+2)(k-r+1)^{t-r+1}$.
Thus,
\begin{align*}
H(T_r(n,k)) & \ge \frac{1}{2n} P(n,k,r) \\
& \ge \frac{1}{4n}k(k-1)\cdots(k-r+2)(k-r+1)^{t-r+1} \cdot \Pi_{\ell=1}^k |A_\ell|! \\
& \ge \frac{1}{4n}(k-r+1)^t \Pi_{\ell=1}^k |A_\ell|! \\
& \ge \frac{1}{4n}(k-r+1)^n (\lfloor n/k \rfloor !)^k (k-r+1)^{-2k(k-1)^2\lceil \sqrt{n} \rceil -(k-1)^2-kr}\\
& = \frac{1}{2}\left(\frac{k-r+1}{k}\right)^n (n-1)! (k-r+1)^{-\Theta(\sqrt{n})}\;.
\end{align*}
\end{proof}

We can now complete the proof of Theorem \ref{t:hypergraphs-2} using Lemma \ref{l:nkr} and Lemma \ref{l:q-p}.
Let $p \in (0,1) $ and let $k \ge r+1$ be the smallest integer such that $\frac{k!}{(k-r)!k^r} > p$.
By (\ref{e:d-nkr}), if $n$ is sufficiently large, then the density of $T_r(n,k)$, denoted by $q$, is larger
than $p$. We apply Lemma \ref{l:q-p} with $G=T_r(n,k)$. By that lemma and by Lemma \ref{l:nkr}  we obtain
\begin{align*}
H_r(n,p) & \ge (p/q)^n e^{-2/p} H(T_r(n,k))(1-o(1))\\
& \ge (p/q)^n \frac{1}{2}\left(\frac{k-r+1}{k}\right)^n (n-1)! (k-r+1)^{-\Theta(\sqrt{n})}\\
& = E(n,p) \left(\frac{k-r+1}{qk}\right)^n (k-r+1)^{-\Theta(\sqrt{n})}\\
& = E(n,p) \left(\frac{(k-r+1)!k^{r-1}}{k!}\right)^n (k-r+1)^{-\Theta(\sqrt{n})}\\
& = E(n,p) \left(\frac{(k-r+1)!k^{r-1}}{k!}\right)^{n-o(n)}\;.
\end{align*}
Finally, observe that since $r > 2$, it holds that $\frac{(k-r+1)!k^{r-1}}{k!} > 1$, hence the theorem.
\qed

\section*{Acknowledgment}
I thank the referees for very useful comments.

\end{document}